\newtheorem{lemma}{Lemma}[section]
\newtheorem{othertheorem}[lemma]{Theorem}
\newtheorem{corollary}[lemma]{Corollary}
\newtheorem{proposition}[lemma]{Proposition}
\theoremstyle{definition}
\newtheorem{definition}[lemma]{Definition}
\newtheorem{remark}[lemma]{Remark}
\newcommand{\blackboard}[1]{\ensuremath{\mathbb{#1}}}
\newcommand{\complexes}{\blackboard{C}}
\newcommand{\hyperbolic}{\blackboard{H}}
\newcommand{\integers}{\blackboard{Z}} %
\newcommand{\reals}{\blackboard{R}}
\newcommand{\PSL}{\ensuremath{\mathrm{PSL}_2}}
\newcommand{\PML}{\mathcal{PML}}
\newcommand{\ML}{\mathcal{ML}}
\newcommand{\UML}{\mathcal{UML}}
\newcommand{\teich}{\mathcal{T}}
\newcommand{\CC}{\mathcal{C}}
\let \length \len
\newcommand{\Fr}{\mathrm{Fr}}
\newcommand{\Int}{\mathrm{Int}}
\newcommand{\ie}{i.e.\ }
\newcommand{\Image}{\mathrm{Im}}
\newcommand{\rbb}{\partial^{RB}_{m_0} }
\newcommand{\bb}{\partial^{B}_{m_0}}
\newcommand{\Rbb}{\partial^{RB}}
\newcommand{\Bb}{\partial^{B}}
\newcommand{\regb}{\partial^\mathrm{reg}}
\newcommand{\ah}{\mathrm{a.h.}}
\newcommand{\mc}{\mathcal{MC}}
\begin{document}
\title{Reduced Bers boundaries of Teichm\"{u}ller spaces}
\author{Ken'ichi Ohshika}

\email{ohshika@math.sci.osaka-u.ac.jp}
\thanks{ Partially supported by JSPS Grant-in-Aid Scientific Research (A) 22244005}
\subjclass[2000]{30F40, 57M50, 57M60}\keywords{Kleinian group, Bers embedding, Teichm\"{u}ller space}
\address{Department of Mathematics, Graduate School of Science, Osaka University, Toyonaka, Osaka 560-0043, Japan}
\maketitle

\begin{abstract}
We consider a quotient space of the Bers boundary of Teichm\"{u}ller space, which we call the reduced Bers boundary, by collapsing each quasi-conformal deformation space into a point.
This reduced Bers boundary turns out to be independent of the basepoint, and the action of the mapping class group on the Teichm\"{u}ller space extends continuously to this boundary.
We show that every auto-homeomorphism on the reduced Bers boundary comes from an extended mapping class.
We also give a way to determine the limit in the reduced Bers boundary up to some ambiguity for parabolic curves for a given sequence in the Teichm\"{u}ller space, by generalising the Thurston compactification. 

\end{abstract}
\section{Introduction}
In 1970, Bers considered  an embedding of  Teichm\"{u}ller space  into the space  of quasi-Fuchsian group, which is called the Bers embedding today (\cite{Be}).
He also showed that the image of the embedding is relatively compact in the space of representations modulo conjugacy, and its boundary, which is now called the Bers boundary, consists of Kleinian groups with unique invariant domains, b-groups.
This construction is interesting both as a compactification of Teichm\"{u}ller space and as a way to produce new Kleinian groups.
In the later development of the theory of Kleinian groups culminating in the resolution of the ending lamination conjecture of Thurston by Brock-Canary-Minsky \cite{BCM}, b-groups play important roles as prototypes of Kleinian groups lying on the boundary of deformation spaces.

Viewed as a compactification of Teichm\"{u}ller space, this construction is not so natural as Thurston's compactification.
Indeed, the Bers boundary depends on the basepoint, which is the lower conformal structure of the b-groups on the boundary, and there is no continuous extension of the action of the mapping class group on Teichm\"{u}ller space to the Bers boundary as was proved by Kerckhoff-Thurston \cite{KT} using the geometric limits of Kleinian groups.
Looking in depth into the argument of Kerckhoff-Thurston to prove the dependence of the Bers boundaries on basepoints, we can see that this phenomenon is caused by the existence of quasi-conformal deformation spaces contained in the boundaries.
Therefore, we can expect that if we consider the quotient space of the Bers boundary obtained by collapsing each quasi-conformal deformation space into a point, the resulting space may be independent of the basepoint.
Indeed, Thurston conjectured that this is the case, according to McMullen \cite{McM}.
The first of our main results solves this conjecture affirmatively: we shall show that the quotient space, which we call the reduced Bers boundary as in our title, is independent of the basepoint and the action of the (extended) mapping class group on Teichm\"{u}ller space extends continuously to the reduced boundary.
As a set, this reduced Bers boundary can be regarded as a subset of the unmeasured lamination space, by considering end invariants of Kleinian groups on the Bers boundary and using the ending lamination theorem.
Neither of these two spaces is Hausdorff.
Nevertheless, it will turn out in \S\ref{sec:basic} that the topology of the reduced Bers boundary is different from the one induced from the unmeasured lamination space.

Once we have an action of the extended mapping class group on the reduced Bers boundary, it is natural to ask whether it is its full symmetries or not.
Papadopoulos (\cite{Pa}) considered this problem for the unmeasured lamination (or foliation) space.
He showed that  there is a dense subset in the unmeasured lamination space which is invariant under the action of the extended mapping class group, such that every auto-homeomorphism induces the same action as an uniquely determined extended mapping class in that subset (except for the cases of a three or four-times-punctured sphere and a closed surface of genus $2$).
This dense subset is the set of closed geodesics, and he used the result of Ivanov, Korkmaz and Luo (\cite{Iv}, \cite{Ko}, \cite{Lu}), which says that every simplicial action on a curve complex is induced from an extended mapping class, to show that the action on this subset comes from an extended mapping class.
We shall show that the same kind of argument works also for the reduced Bers boundary.
Moreover, we shall show that the action of any homeomorphism coincides with  that of an extended mapping class group on the entire reduced Bers boundary, not only on the dense subset.

As was shown by Kerckhoff-Thurston, the Bers boundary and the Thurston boundary are quite different.
The Thurston compactification is the completion of the image of the embedding of the  Teichm\"{u}ller space $\teich(S)$ into the projective space $P\reals_+^{\mathcal S}$, where $\mathcal S$ is the set of simple closed curves on $S$, and the embedding is defined by setting the $s$-coordinate of $m \in \teich(S)$ to be the hyperbolic length of $s$ with respect to $m$.
In this construction, only simple closed curves whose lengths diverge in the highest order are reflected to determine a point in $P\reals_+^{\mathcal S}$.
In other words, what matter is the part on the surface degenerating in the highest order, and all the other parts are ignored.
The Bers boundary reflects information on degeneration of hyperbolic structures in lower orders.
Therefore, to understand the reduced Bers boundary, it is necessary to generalise the construction of the  Thurston boundary so that we can take into account degeneration in lower orders.
The work of Morgan-Shalen, Bestvina, Paulin, and Chiswell (\cite{MS}, \cite{Bes}, \cite{Pa}, \cite{Chis}) is a trial to construct a larger compactification taking into account degeneration in lower orders.
For our purpose their compactification is not enough since we also need to consider degeneration which cannot be compared using the logarithms of the length functions.
In \S \ref{sec:detection}, we shall introduce a  notion of multi-layered Thurston limits, which gives us  information to determine what is the limit point in the reduced Bers boundary up to some ambiguity on parabolic curves for any given sequence in Teichm\"{u}ller space.
 
The author is grateful to Ursula Hamenst\"{a}dt whose comments made him aware of the difference of the topologies of the reduced Bers boundary and of the unmeasured lamination space, and to Athanase Papadopoulos for explaining his work on the actions on unmeasured lamination spaces, which motivated the results in \S \ref{sec:rigidity}, and for his helpful comments on the manuscript.

\section{Preliminaries}
\subsection{Teichm\"{u}ller spaces and their Bers embeddings}
Let $S$ be an orientable surface of genus $g$ and $p$ punctures, where $g$ or $p$ may be $0$.
Throughout this paper,  we always assume that the surface is of finite type, and $\xi(S)=3g+p \geq 4$.
The Teichm\"{u}ller space of $S$ is denoted by $\teich(S)$.

For a surface $S$ as above, we let $AH(S)$ be the set of faithful discrete representations of $\pi_1(S)$ to $\PSL \complexes$ sending every loop going around a puncture to a parabolic element, modulo conjugacy.
(Here $AH$ stands for the space of \lq\lq  \underline{h}omotopic hyperbolic structures" with the \lq\lq \underline{a}lgebraic topology".)
We endow $AH(S)$ with the topology induced from the quotient topology of the representation space.
An element of $AH(S)$ is expressed in the form of $(G,\phi)$, where $\phi$ is a representation and $G$ is its image in $\PSL \complexes$.
We call $(G, \phi)$ a marked Kleinian surface group with marking $\phi$.
For a marked Kleinian surface group $(G,\phi)$, we denote a homotopy equivalence from $S$ to $\hyperbolic^3/G$ induced from $\phi$ by the corresponding letter in the upper case, for instance, $\Phi$ for $\phi$.

For a Kleinian group $G$, we denote its limit set in the sphere at infinity by $\Lambda_G$ and the region of discontinuity, which is the complement of $\Lambda_G$ in the sphere at infinity, by $\Omega_G$.

A  Kleinian surface group $G$ is called {\sl quasi-Fuchsian} when $\Lambda_G$ is a Jordan curve.
Let $QF(S)$ denote the subspace of $AH(S)$ consisting of all marked quasi-Fuchsian groups.
By the theory of Ahlfors and Bers, there is a parametrisation $qf: \teich(S) \times \teich(\bar S) \rightarrow QF(S)$, where $\teich(\bar S)$ denotes the Teichm\"{u}ller space where the markings are orientation reversing.
For $(m,n) \in \teich(S) \times \teich(\bar S)$, its image $qf(m,n)$ is a marked quasi-Fuchsian group $(G, \phi)$ such that the pair of marked Riemann surfaces obtained as $\Omega_G/G$ is exactly $(m,n)$.
We call $m$ the lower conformal structure and $n$ the upper conformal structure of $(G,\phi)$.

For $m_0 \in \teich(m_0)$, we call $qf(\{m_0\} \times \teich(\bar S))$ the {\sl Bers slice} with basepoint at $m_0$, and the map from $\teich(S)$ into $QF(S)$ defined by $qf(m_0,\ )$, where we identify $\teich(S)$ and $\teich(\bar S)$ by  complex conjugation, the {\sl Bers embedding}.
We take the closure of $qf(\{m_0\}\times \teich(\bar S))$ in $AH(S)$.
Then its boundary consists of marked Kleinian surface groups whose domains of discontinuity have only one invariant component.
Such groups are called {\sl b-groups}.
We call the boundary the {\sl Bers boundary} with basepoint at $m_0$.
For the invariant component $\Omega^0$ of $\Omega_G$ for a b-group $(G,\phi)$, the Riemann surface $\Omega^0/G$ with a marking coming from $\phi$ is conformal to $(S, m_0)$ preserving the markings.

A Kleinian group $G$  is said to be geometrically finite when the convex core of $\hyperbolic^3/G$ has finite volume.
A geometrically finite b-group is called a {\sl regular b-group}.

\subsection{Laminations and end invariants}
Fix a hyperbolic metric on $S$.
A {\sl geodesic lamination} $\lambda$ on $S$ is a closed subset consisting of disjoint simple geodesics.
When we talk about laminations, we always fix some complete hyperbolic metric on $S$.
The choice of a metric does not concern us: any hyperbolic will do.
The geodesics constituting $\lambda$ are called the leaves of $\lambda$.
A {\sl minimal component} of $\lambda$ is a non-empty sublamination of $\lambda$ in which each leaf is dense.
A geodesic lamination is decomposed into finitely many minimal components and isolated leaves both of whose ends spiral around minimal components.
We say that a geodesic lamination is minimal when the lamination itself is a minimal component.
For a geodesic lamination $\lambda$ which is not a closed geodesic, a subsurface of $S$ with geodesic boundary containing $\lambda$ which is minimal among such surfaces is called a  {\sl minimal supporting surface}.
This is uniquely determined.
We call the interior of the minimal supporting surface, the {\sl minimal open supporting surface}.

A {\sl measured lamination} is a geodesic lamination $\lambda$ endowed with a holonomy-invariant transverse measure $\mu$.
For a measured lamination $(\lambda, \mu)$, its support is a maximal sublamination $\lambda'$ such that for any arc $\alpha$ intersecting $\lambda'$ at its interior, we have $\mu(\alpha)>0$.
When we talk about a measured lamination, we always assume that it has full support, \ie the support coincides with the entire lamination.
It is known that any minimal geodesic lamination admits a non-trivial holonomy-invariant transverse measure.

The set of measured laminations on $S$ with the weak topology on the transverse measures is called the {\sl measured lamination space} and is denoted by $\ML(S)$.
Thurston proved that $\ML(S)$ is homeomorphic to $\reals^{6g-6+2p}$.
(See \cite{FLP} and \cite{ThS}.)
The {\sl projectivised measured lamination space} is defined to be the quotient space of $\ML(S) \setminus \{\emptyset\}$ by identifying scalar multiples with respect to the transverse measures, and is denoted by $\PML(S)$.
The quotient space of $\ML(S) \setminus \{\emptyset\}$ obtained by identifying two measured laminations with the same support is called the {\sl unmeasured lamination space}, and is denoted by $\UML(S)$.
Evidently $\UML(S)$ is also a quotient space of $\PML(S)$.

By Margulis' lemma, there exists a positive constant $\epsilon_0$ such that for every hyperbolic 3-manifold $\hyperbolic^3/G$, the set of points where the injectivity radii are less than $\epsilon_0$ consists of disjoint union of open tubular neighbourhoods of simple closed geodesics, called Margulis tubes, and cusp neighbourhoods corresponding to maximal parabolic subgroups of $G$.
The complement of the cusp neighbourhoods of $\hyperbolic^3/G$ is called the {\sl non-cuspidal part} and is denoted by $(\hyperbolic^3/G)_0$.
By Margulis' lemma again, it is known that a maximal parabolic subgroup of a Kleinian group is isomorphic to either $\integers$ or $\integers \times \integers$.
A cusp, or a cusp neighbourhood is called a $\integers$-cusp (neighbourhood)  or $\integers \times \integers$-cusp (neighbourhood) depending on the corresponding maximal parabolic subgroup is isomorphic to $\integers$ or $\integers \times \integers$.
A $\integers \times \integers$-cusp neighbourhood is homeomorphic to $S^1 \times S^1 \times \reals$, whereas a $\integers$-cusp neighbourhood is homeomorphic to $S^1 \times \reals^2$.
For a Kleinian surface group $G$, the hyperbolic 3-manifold $\hyperbolic^3/G$ cannot have  $\integers \times \integers$-cusps.

A {\sl parabolic curve} of $(G,\phi) \in AH(S)$ is a non-peripheral simple closed curve $c$ on $S$ such that $\Phi(c)$ is homotopic to a core curve of a component of $\Fr (\hyperbolic^3/G)_0$ touching a $\integers$-cusp neighbourhood.
In the case when $(G,\phi)$ is a b-group, there is a system of disjoint, non-parallel, parabolic curves on $S$ such that every parabolic curve is isotopic to one in the system.

For a marked Kleinian surface group $(G,\phi)$, a minimal geodesic lamination $\lambda$ on $S$ which is not a simple closed curve is said to be an {\sl ending lamination} if there is no pleated surface homotopic to $\Phi$ which realises $\lambda$.
Since $\lambda$ is minimal, $\lambda$ admits a transverse measure and can be regarded as an unmeasured lamination.
When $\lambda$ is an ending lamination, every frontier component of its minimal supporting surface is a parabolic curve.
In the case when $(G,\phi)$ is a b-group the ending laminations are pairwise disjoint and are disjoint from all parabolic curves.
We call the set of all parabolic curves and all ending laminations of $(G,\phi)$ the {\sl end invariant} of $(G,\phi)$.

\subsection{Mapping class groups and curve complexes}
Isotopy classes of diffeomorphisms of a surface $S$, including orientation reversing ones, are called {\sl extended mapping classes} of $S$.
The group which they form is called the {\sl extended mapping class group} of $S$, and is denoted by $\Gamma^*(S)$.
The mapping class group is a subgroup of index 2 of the extended mapping class group, which consists of all orientation preserving isotopy classes.

We call an isotopy class of essential  (\ie non-contractible and non-peripheral) simple closed curves on $S$ a {\sl curve} on $S$.
For a surface with $\xi(S) > 4$, we consider a simplicial complex whose vertices are the curves on $S$, such that curves $c_0, \dots , c_p$ span a $p$-simplex if and only if they are realised as disjoint simple closed curves.
This simplicial complex is called the {\sl curve complex} of $S$ and is denoted by $\CC(S)$.
The vertex set of $\CC(S)$ is denoted by $\CC^0(S)$.

When $\xi(S)=4$, the curve complex is defined to be a graph whose vertices are the curves, such that two curves are connected by an edge if they intersect at fewest possible intersection, \ie at two points when $S$ is a four-times punctured sphere, and at one point if $S$ is a once-punctured torus.

\subsection{Geometric limits and their model manifolds}
\label{gl}
A sequence of Kleinian groups $\{G_i\}$ is said to converge geometrically to a Kleinian group $H$ if for any convergent sequence $ \{\gamma_{i_j} \in G_{i_j}\}$ its limit lies in $H$, and  any element $\gamma \in H$ is a limit of some $\{g_i \in G_i\}$.
To distinguish it from the geometric convergence defined here, we call the convergence with respect to the topology of $AH(S)$ the {\sl algebraic convergence}.
It is known that any sequence of non-elementary Kleinian groups has a geometrically convergent subsequence.

When $\{G_i\}$ converges to $H$ geometrically, if we take a basepoint $x$ in $\hyperbolic^3$ and its projections $x_i \in \hyperbolic^3/G_i$ and $x_\infty \in \hyperbolic^3/H$, then $(\hyperbolic^3/G_i, x_i)$ converges to $(\hyperbolic^3/H, x_\infty)$ with respect to the pointed Gromov-Hausdorff topology:
that is, there exists a $(K_i, r_i)$-approximate isometry $B_{r_i}(\hyperbolic^3/G_i,x_i) \rightarrow B_{K_i r_i}(\hyperbolic^3/H, x_\infty)$ with $K_i \rightarrow 1$ and $r_i \rightarrow \infty$.

In Ohshika-Soma \cite{OS}, we constructed bi-Lipschitz model manifolds for geometric limits of Kleinian surface groups.
For a geometric limit $H$ of a sequence of Kleinian surface groups, its ($K$-)bi-Lipschitz model manifold is a 3-manifold with a metric $\mathbf M$ which has a $K$-bi-Lipschitz map $f: \mathbf M \rightarrow (\hyperbolic^3/H)_0$, called a model map.

Our model manifold has a decomposition into what we call bricks.
A {\sl brick} is a product interval bundle of the form $\Sigma \times J$, where $\Sigma$ is an incompressible subsurface of $S$  (\ie a subsurface whose frontiers are essential curves), and $J$ is a closed or half-open interval in $[0,1]$.
We call $\Sigma \times \max J$ the upper front and $\Sigma \times \max J$ the lower front of the brick provided that $\max J$ or $\min J$ exists.
A {\sl brick manifold} is a manifold with possibly empty torus or open annulus boundaries consisting of countably many bricks.
Two bricks can intersect only at their fronts in such a way that an incompressible subsurface is the upper front of one brick  is pasted to an incompressible subsurface in the lower front of the other brick.
Here we state one of the main theorems in \cite{OS}, restricted to  the case when the sequence also converges algebraically.

\begin{othertheorem}[Ohshika-Soma \cite{OS}]
\label{Ohshika-Soma}
Let $\{(G_i, \phi_i)\}$ be an algebraically convergent sequence in $AH(S)$, and take $\phi_i$ so that $\{\phi_i\}$ converges as representations to $\psi$ with $M'=\hyperbolic^3/\psi(\pi_1(S))$.
Let $H$ be a geometric limit of $\{G_i\}$, and set $M_\infty=\hyperbolic^3/H$.
Then, there are a  model manifold $\mathbf{M}$ of $(M_\infty)_0$, which has a structure of brick manifold, and a model map $f: \mathbf{M} \rightarrow (M_\infty)_0$ which is a $K$-bi-Lipschitz homeomorphism for a constant $K$ depending only on $\chi(S)$.
The model manifold $\mathbf M$ has the following properties.
\begin{enumerate}
\item $\mathbf M$ is embedded in $S \times [0,1]$ preserving the vertical and horizontal directions of the bricks.
\item There is no essential properly embedded annulus in $\mathbf M$.
\item An end contained in a brick is either geometrically finite or simply degenerate. 
The model map takes  geometrically finite ends to  geometrically finite ends of $(M_\infty)_0$, simply degenerate ends  to simply degenerate ends of $(M_\infty)_0$.
\item Every geometrically finite end of $\mathbf M$ corresponds to an incompressible subsurface of either $S \times \{0\}$ or $S \times \{1\}$.
\item An end not contained in a brick is neither geometrically finite nor simply degenerate.
For such an end, there is no half-open annulus tending to the end which is not properly homotopic into a boundary component.
We call such an end {\em wild}.
\item $\mathbf M$ has a brick of the form $S \times J$, where $J$ is an interval containing $1/2$, and $f_{\#}(\pi_1(S \times \{t\}))$ with $t \in J$ carries the image of $\pi_1(M')$ in $\pi_1(M_\infty)$.
\end{enumerate}
\end{othertheorem}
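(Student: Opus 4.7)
The plan is to obtain the model manifold by taking a geometric limit of the Minsky-style bi-Lipschitz models of the approximating groups $(G_i,\phi_i)$ and then reading off the brick structure from the resulting limit. First, for each $i$ I would invoke the bi-Lipschitz model of $(\hyperbolic^3/G_i)_0$ whose blocks are indexed by (curves in) subsurfaces of $S$ and whose bi-Lipschitz constant depends only on $\chi(S)$; this follows from the work of Minsky and Brock--Canary--Minsky underlying the resolution of the ending lamination conjecture. These models embed naturally in $S\times[0,1]$ in a way that respects vertical direction (height in the convex core) and horizontal direction (subsurface factor), which will produce property (1) automatically once the limit is formed.

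Next I would pass to a geometric limit. Using the $(K_i,r_i)$-approximate isometries supplied by Gromov--Hausdorff convergence of $(\hyperbolic^3/G_i,x_i)$ to $(\hyperbolic^3/H,x_\infty)$, together with the uniform bi-Lipschitz constants of the finite-stage models, a standard compactness / diagonal extraction yields a decomposition of $(M_\infty)_0$ into pieces that are limits of blocks in the $\mathbf M_i$'s. I would group these limit pieces into \emph{bricks} $\Sigma\times J$ by following sequences of blocks that stabilise to a fixed subsurface $\Sigma$ and whose heights converge to an interval $J$; since blocks in the Minsky models are glued only along incompressible subsurfaces, the same holds in the limit, yielding the pasting condition in the definition of a brick manifold. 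The brick $S\times J$ with $1/2\in J$ in item (6) is the one inherited from the main block containing the basepoint, which algebraically carries $\pi_1(M')$ since $\{\phi_i\}$ converges as representations.

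To verify the remaining enumerated properties I would argue as follows. Item (2): any essential properly embedded annulus would either sit inside a single brick (contradicting incompressibility of the labelling surface together with the absence of essential annuli in the finite-stage Minsky models) or cross brick fronts (contradicting the incompressible-pasting rule). Items (3) and (4): an end sitting inside a brick $\Sigma\times J$ with a finite endpoint of $J$ is a geometrically finite end corresponding to an incompressible subsurface of $S\times\{0\}$ or $S\times\{1\}$ by direct inspection of the product structure, while an end sitting inside a brick with $\sup J=\infty$ or $\inf J=-\infty$ type behaviour is simply degenerate by the usual criterion (a sequence of pleated surfaces exits the end), transported across the model map. The model map then sends geometrically finite ends to geometrically finite ends and simply degenerate ends to simply degenerate ends because it is a bi-Lipschitz homeomorphism and both notions are geometric.

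The main obstacle will be item (5), the \emph{wild ends}. These correspond to sequences of blocks in $\mathbf M_i$ whose subsurface labels never stabilise as $i\to\infty$ but whose geometry still accumulates in the limit; such ends cannot be collected into any single brick. The hard point is to rule out a half-open annulus tending to such an end that is not properly homotopic into a boundary component: the presence of such an annulus would force a recurrent Margulis-tube structure that, by uniform bi-Lipschitz control, would actually stabilise the block labels and contradict wildness. Once this is established, wildness is automatically incompatible with being geometrically finite (no cusp or flare neighbourhood exists) and with being simply degenerate (no asymptotic pleated surfaces), yielding the trichotomy. Handling this wild-end analysis cleanly, and in particular controlling the interaction between algebraic and geometric limits at parabolic loci where new $\integers$-cusps may appear in $H$ that were not present in any $G_i$, is the delicate core of the argument.
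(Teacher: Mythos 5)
The statement you are proving is not proved in this paper at all: it is quoted verbatim (restricted to the algebraically convergent case) from Ohshika--Soma \cite{OS}, so there is no in-paper argument to compare against. At the level of architecture your sketch does point in the direction of the actual construction in \cite{OS} --- one does build the model of the geometric limit out of the bi-Lipschitz models of the approximants, transported by the $(K_i,r_i)$-approximate isometries --- but almost every enumerated property is asserted where it needs to be proved, and the assertions land exactly on the hard parts.

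Concretely: (a) the phrase \lq\lq sequences of blocks that stabilise to a fixed subsurface $\Sigma$'' is the crux, not a device; the subsurface labels of the blocks of $\mathbf M_i$ need \emph{not} stabilise under the approximate isometries, and the failure of stabilisation is precisely what produces the wild ends, so you cannot both assume stabilisation to build the bricks and then recover wildness as a residual phenomenon. Controlling how the hierarchies of the $G_i$ behave under geometric convergence is the technical core of \cite{OS} and is absent here. (b) Property (2) is a global efficiency statement about the limit decomposition; it is not inherited termwise from the finite-stage models, since the geometric limit typically acquires new $\integers$-cusps and new product regions not present in any $G_i$, and an essential annulus could a priori run through infinitely many bricks. (c) For (5), \lq\lq a recurrent Margulis-tube structure that would stabilise the block labels'' is not an argument; ruling out essential half-open annuli tending to a wild end requires a genuine analysis of the accumulating bricks. (d) For (6), the existence of a brick of the form $S\times J$ (over the \emph{whole} surface) carrying the algebraic limit rests on the fact that the algebraic limit sits in the geometric limit as an embedded horizontal surface --- the Kerckhoff--Thurston/Anderson--Canary style input that this paper invokes separately when defining standard algebraic immersions --- and your sketch does not supply it. As it stands the proposal is a plausible roadmap, not a proof.
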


In \cite{OhD}, we introduced the notion of standard algebraic immersion.
In the situation as in Theorem \ref{Ohshika-Soma}, there is a map from $S$ to $\mathbf M$ corresponding to the inclusion of the algebraic limit into the geometric limit.
We can homotope such a map to a standard position.
What we need in this paper is not a general definition of standard algebraic immersion but that of the special case as follows.
Suppose that the $(G_i, \phi_i)$ lie in a Bers slice.
Then the invariant domain of the algebraic limit descends to a component of the region of discontinuity of the geometric limit $H$.
(This was proved in Corollary 2.2 of  Kerckhoff-Thurston \cite{KT} when the $G_i$ are quasi-Fuchsian.
The same argument works even when the $G_i$ are b-groups.)
In this case, we can take a standard algebraic immersion to be a horizontal embedding (with respect to the product structure $S \times [0,1]$) of $S$ which is parallel in $\mathbf M$ to $S \times \{0\}$.
Therefore, from now on, when we talk about a standard algebraic immersion, we mean such a horizontal embedding.

Each end of $(M_\infty)_0$ corresponds to that of $\mathbf M$, and is either geometrically finite or simply degenerate or wild.
An end is said to be {\sl algebraic} when it is lifted to the algebraic limit $M'$.
We note that a wild end cannot be algebraic.

\subsection{Hausdorff limits of shortest pants and end invariants}

In \cite{OhD}, we gave a way to determine the end invariants of an algebraic limit of $\{qf(m_i,n_i)\}$ using the Hausdorff limits of shortest pants decompositions of $(S,m_i)$ and $(S,n_i)$.
The following is a portion of  Theorem 5.2 in \cite{OhD} in the case when $m_i$ is fixed, \ie the groups lie on a Bers slice.

\begin{proposition}
\label{Hausdorff limit}
Let $\{m_i\}$ be a divergent sequence in $\mathcal T(S)$.
Let $P_i$ be a shortest pants decomposition with respect to the hyperbolic structure compatible with $m_i$.
Regard $P_i$ as a union of closed geodesics in a fixed hyperbolic surface $(S,m_0)$, and suppose that $P_i$ converges to a   geodesic lamination   $\lambda$ in the Hausdorff topology.
Suppose also that $\{qf(m_0, m_i)\}$ converges to a Kleinian group $G'$ in $AH(S)$.
Then the  minimal components of $\lambda$ that are not   simple closed curves are exactly the ending laminations of $\hyperbolic^3/G'$.
\end{proposition}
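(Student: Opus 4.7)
The plan is to interpret both sides of the claim through the bi-Lipschitz model $\mathbf M$ of $(M_\infty)_0 = (\hyperbolic^3/H)_0$ produced by Theorem \ref{Ohshika-Soma} for a geometric limit $H$ of a subsequence of $G_i := qf(m_0, m_i)$. Since the $G_i$ lie in a Bers slice with basepoint $m_0$, the invariant domain descends under the geometric convergence, so the standard algebraic immersion realising the inclusion $G' \hookrightarrow H$ may be taken as a horizontal surface $S \times \{t\}$ parallel to $S \times \{0\}$ inside $\mathbf M \subset S \times [0,1]$; the algebraic upper ends of $(M_\infty)_0$ correspond to ends of $\mathbf M$ lying above this horizontal surface. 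The key quantitative input is the Bers constant: each $c \in P_i$ has $\len_{m_i}(c)\le L(S)$, and Bers's inequality gives a uniform upper bound on $\len_{G_i}(c)$, so via the approximate isometries from geometric convergence the $P_i$-curves become short closed geodesics in $M_\infty$.

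For the forward direction, let $\mu$ be a minimal component of $\lambda$ which is not a simple closed curve and let $\Sigma(\mu)$ be its minimal supporting surface. Hausdorff accumulation on $\mu$ together with the minimality of $\mu$ forces all but finitely many accumulating $P_i$-curves to be isotopic into $\Sigma(\mu)$ and to fill it as $i\to\infty$. I would first show the frontier components of $\Sigma(\mu)$ become parabolic in $G'$: each frontier curve either lies in $P_i$ for large $i$ with $m_i$-length tending to zero, or its closed geodesic representative in $M_\infty$ would, together with the infinitely many short filling curves inside $\Sigma(\mu)$, produce an essential annulus in $\mathbf M$, contradicting part (2) of Theorem \ref{Ohshika-Soma}. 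Then I would prove $\mu$ is unrealisable by a pleated surface homotopic to $\Phi'$ by contradiction: a realisation would extend via algebraic convergence to pleated surfaces in $G_i$ approximating $\mu$, forcing infinitely many disjoint $P_i$-curves to have uniformly bounded length on a pleated surface inside $\Sigma(\mu)$, which violates the collar lemma for pleated surfaces.

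For the converse, let $\nu$ be an ending lamination of $G'$ with minimal supporting surface $\Sigma(\nu)$, whose frontier is parabolic in $G'$. In $\mathbf M$, $\nu$ corresponds to a simply degenerate algebraic end sitting above (a lift of) $\Sigma(\nu)$, and there is an exit sequence of simple closed curves $\gamma_j \subset \Sigma(\nu)$ whose translation lengths in $G'$ tend to zero and which Hausdorff-converge to $\nu$ on $(S,m_0)$. Transferring via the approximate isometries from the geometric limit, the $\gamma_j$ acquire arbitrarily small $m_i$-length for $i$ sufficiently large, so each $\gamma_j$ is eventually a component of the shortest pants decomposition $P_i$ (any decomposition not containing it could be strictly shortened by replacing one of its curves with $\gamma_j$ and extending). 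A diagonal argument then produces $\nu$ as a minimal component of $\lambda$, whose frontier curves contribute the required isolated closed-geodesic components.

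The main obstacle is the unrealisability step in the forward direction: converting the existence of bounded-length filling curves in $\Sigma(\mu)$ into a topological obstruction to pleated-surface realisation. This requires a careful combination of the collar lemma, the absence of essential annuli in $\mathbf M$ from Theorem \ref{Ohshika-Soma}(2), and tracking the behaviour of pleated surfaces under algebraic convergence. Simultaneously, verifying that the frontier curves of $\Sigma(\mu)$ genuinely pinch rather than remain as bounded-length non-parabolic curves in $G'$ reduces to another delicate no-essential-annulus argument inside the algebraic brick of the model, and the two steps must be coordinated.
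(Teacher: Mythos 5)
The paper offers no proof of this proposition at all---it is quoted verbatim as a special case of Theorem 5.2 of \cite{OhD}---so you are necessarily supplying an argument from scratch. Your forward direction is broadly in the spirit of the standard one, but the decisive step, unrealisability of $\mu$, is exactly the step you flag as ``the main obstacle'' and do not carry out; and your preparatory claim that the $P_i$-curves accumulating on $\mu$ are eventually isotopic into $\Sigma(\mu)$ and fill it is unjustified (a component of $P_i$ can run close to $\mu$ along part of its length and still leave $\Sigma(\mu)$). This is precisely why the argument in \cite{OhD}, echoed in the proof of Proposition \ref{end invariants} here, works with subsurface projections and Lemma 5.11 of \cite{Mi} rather than with containment of curves in $\Sigma(\mu)$. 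A cleaner route to unrealisability, once the projection step is in place, is Brock's continuity of the length function: rescalings $r_ic_i$ of the accumulating components converge in $\ML(S)$ to a measured lamination supported on $\mu$ with $r_i\rightarrow 0$, while $\len_{G_i}(c_i)\le 2\len_{m_i}(c_i)$ is bounded by the Bers constant, so $\len_{G'}(\mu)=\lim r_i\len_{G_i}(c_i)=0$.

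The converse direction, however, rests on two false premises. First, a simply degenerate end need not admit an exiting sequence of closed geodesics whose translation lengths tend to zero: in Minsky's bounded-geometry examples the injectivity radius is bounded below, yet the ending lamination must still appear as a minimal component of $\lambda$, so your mechanism cannot be the right one. Second, even where arbitrarily short geodesics do exist, Bers' inequality only gives $\len_{G_i}(\gamma)\le 2\len_{m_i}(\gamma)$; small length in the $3$-manifold (or in the geometric limit) does not transfer back to small $m_i$-length on the conformal boundary, so there is no reason for your curves $\gamma_j$ to be forced into the shortest pants decomposition $P_i$. The argument that actually works is the one rehearsed in case (b) of the proof of Proposition \ref{end invariants}: the simply degenerate end corresponding to $\nu$ determines, via the approximate isometries of the geometric convergence, subsurfaces $\Sigma_i$ and tight geodesics in hierarchies for $(G_i,\phi_i)$ whose last vertices converge to $\nu$, and Lemma 5.11 of \cite{Mi} places the subsurface projection of the bounded-length system $P_i$ within uniformly bounded distance of those vertices, which forces the Hausdorff limit of $P_i$ to contain $\nu$ as a minimal component.
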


The next lemma  is an easy corollary of Theorem \ref{Ohshika-Soma}.
\begin{lemma}
\label{origin of parabolics}
Let $\{(G_i, \phi_i)\}$ be a sequence of marked Kleinian surface groups lying on a Bers slice such that as representations $\{\phi_i\}$ converges to $\psi: \pi_1(S) \rightarrow \PSL\complexes$, whose image we denote by $G'$.
Let $H$ be a geometric limit of a subsequence of $\{G_i\}$, and $\mathbf M$ a bi-Lipschitz model manifold of $(\hyperbolic^3/H)_0$.
Let $c \subset S$ be a parabolic curve of $(G',\psi)$, and $f: S \rightarrow M$  a standard algebraic immersion.
Then one and only one of the following three holds.
\begin{enumerate}
\item
There is an annulus boundary component $A$ of $\mathbf M$ both of whose ends lie on  $S \times \{1\}$, and $f(c)$ is homotopic to a core curve of $A$.
\item
There is a simply degenerate or wild end of the form $\Sigma \times \{t\}$ such that $f(c)$ is homotopic into $\Fr\Sigma \times \{t-\epsilon\}$ in $\mathbf M$ for any small $\epsilon >0$.
In other words, $f(c)$ is homotopic to a core curve of an annuls touching a $\integers$-cusp neighbourhood attached to a simply degenerate or wild end.
\item
There is a torus boundary component of $\mathbf M$ into which $f(c)$ is homotopic.
\end{enumerate}
\end{lemma}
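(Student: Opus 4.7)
The plan is to translate the parabolicity of $\psi(c)$ into geometric information about the cusp it produces in the geometric limit, and then transport that information back through the bi-Lipschitz model map to the brick manifold $\mathbf M$. Since $c$ is a parabolic curve of $(G',\psi)$, the element $\psi(c)$ is parabolic in $G'$ and remains parabolic after the inclusion of the algebraic limit into $H$. Because the standard algebraic immersion $f\colon S \to \mathbf M$ realises this inclusion on fundamental groups up to conjugacy, the image of $f(c)$ under the model map is a loop in $(\hyperbolic^3/H)_0$ representing $\psi(c)$, hence is homotopic into the Margulis cusp neighbourhood associated with the maximal parabolic subgroup $P$ of $H$ containing $\psi(c)$. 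By Margulis' lemma $P$ is isomorphic to either $\integers$ or $\integers \times \integers$, and I would split the argument on this dichotomy.

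In the $\integers \times \integers$ case the cusp neighbourhood of $(\hyperbolic^3/H)_0$ is bounded by a torus, which under the bi-Lipschitz model correspondence comes from a torus boundary component $T$ of $\mathbf M$. Since $f(c)$ represents the parabolic class carried by $T$, it is homotopic into $T$, giving case (3). In the $\integers$ case the cusp neighbourhood is bounded by an open annulus, which corresponds under the model map to an annulus boundary component $A$ of $\mathbf M$ whose core is freely homotopic to $f(c)$. By property (1) of Theorem \ref{Ohshika-Soma} the annulus $A$ respects the product structure of $S \times [0,1]$, so it sits vertically above an essential simple closed curve on $S$ isotopic to $c$.

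Next I would analyse the two ends of the open annulus $A$ inside $\mathbf M \subset S \times [0,1]$. Each end either meets the horizontal boundary at $S \times \{0\}$ or $S \times \{1\}$, or tends to an end of $\mathbf M$; by properties (3) and (5) of Theorem \ref{Ohshika-Soma} such an end is geometrically finite, simply degenerate, or wild, and property (4) lets a geometrically finite endpoint be absorbed into the adjacent horizontal boundary. The main obstacle is to exclude endpoints of $A$ on $S \times \{0\}$. Here the Bers slice hypothesis is essential: as noted in the paper (the extension to b-groups of Corollary 2.2 of Kerckhoff-Thurston), the invariant domain of the algebraic limit $G'$ descends to a component of $\Omega_H$ conformal to $(S, m_0)$, so the lower horizontal frontier of $\mathbf M$ sits on the geometrically finite end of $(\hyperbolic^3/H)_0$ with the fixed conformal structure $m_0$. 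An annular rank-one cusp running into this frontier would force the core of $A$ to correspond to a puncture of $(S, m_0)$, contradicting the non-peripheral condition built into the definition of a parabolic curve.

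Having excluded $S \times \{0\}$, either both ends of $A$ lie on $S \times \{1\}$, which is case (1), or at least one end lies at a simply degenerate or wild end of $\mathbf M$, which is case (2). The three alternatives are mutually exclusive, since case (3) requires a rank-two cusp whereas cases (1) and (2) require a rank-one cusp, and the endpoint configurations distinguishing (1) from (2) are manifestly disjoint. The most delicate step is the one that uses the Bers slice hypothesis to rule out endpoints on $S \times \{0\}$; the remainder of the argument is bookkeeping that transports the cusp structure of $(\hyperbolic^3/H)_0$ to $\mathbf M$ via the bi-Lipschitz model correspondence.
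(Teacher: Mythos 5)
Your argument is correct and is essentially the intended one: the paper gives no written proof of this lemma, stating only that it is an easy corollary of Theorem \ref{Ohshika-Soma}, and your derivation — the rank dichotomy for the maximal parabolic subgroup of $H$ containing $\psi(c)$, transport of the corresponding cusp boundary through the bi-Lipschitz model correspondence, and the use of the Bers-slice hypothesis (via the invariant component descending to a full copy of $(S,m_0)$) to exclude annulus ends on $S\times\{0\}$ — is exactly the expected way to fill it in.
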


We shall next state and prove a  result similar to Proposition \ref{Hausdorff limit} for the limit of b-groups.

\begin{proposition}
\label{end invariants}
Let $\{(G_i, \phi_i)\}$ be a sequence of marked b-groups lying on a Bers slice which converges algebraically to a marked b-group $(G', \phi')$.
Let $\lambda_i$ be either a parabolic curve or an ending lamination of $(G_i,\phi_i)$.
Then every minimal component of the Hausdorff limit $\lambda_\infty$ of $\{\lambda_i\}$ that is not  a simple closed curve  is an ending lamination  of $(G', \phi')$.
Moreover, every simple closed curve in $\lambda_\infty$ is a parabolic curve of $(G', \phi')$.
\end{proposition}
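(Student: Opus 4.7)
The plan is to combine the geometric limit construction of Theorem~\ref{Ohshika-Soma} with an approximation by short simple closed curves, and then analyse each component of $\lambda_\infty$ separately via an algebraic length argument and Lemma~\ref{origin of parabolics}.

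First I pass to a subsequence on which $\{G_i\}$ converges geometrically to a Kleinian group $H$, and fix a bi-Lipschitz model manifold $\mathbf{M}$ of $(\hyperbolic^3/H)_0$ with model map $f : \mathbf{M} \to (\hyperbolic^3/H)_0$ and standard algebraic immersion $e : S \to \mathbf{M}$ horizontally parallel to $S \times \{0\}$, as provided by Theorem~\ref{Ohshika-Soma}. Every end invariant of $(G', \phi')$ corresponds, via $e$ and $f$, to a feature of $\mathbf{M}$ lying above $e(S)$, as described in Lemma~\ref{origin of parabolics} and its analogue for ending laminations (algebraic simply degenerate upper ends). Next, for each $i$, if $\lambda_i$ is a parabolic curve set $c_{i,j} := \lambda_i$; if $\lambda_i$ is an ending lamination of the b-group $(G_i, \phi_i)$, produce simple closed curves $c_{i,j}$ on $S$ with $c_{i,j} \to \lambda_i$ in the Hausdorff topology on $(S, m_0)$ and $\ell_{G_i}(c_{i,j}) \to 0$ as $j \to \infty$, for instance as core curves of a cofinal exit sequence of Margulis tubes inside the upper degenerate end of $(G_i, \phi_i)$. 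By compactness and metrisability of the Hausdorff topology on geodesic laminations, a diagonal choice yields simple closed curves $c_i$ with $c_i \to \lambda_\infty$ Hausdorff and $\ell_{G_i}(c_i) \to 0$, reducing the problem to short simple closed curves.

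Let $c$ be a simple closed curve component of $\lambda_\infty$. Either a subsequence $c_{i_k}$ equals $c$, in which case continuity of the length function under algebraic convergence gives $\ell_{G'}(c) = \lim_k \ell_{G_{i_k}}(c) = 0$; or else the $c_i$ spiral around $c$, and standard surface topology lets one write $c_i = c^{n_i} w_i$ (up to a bounded correction) with $n_i \to \infty$ and $w_i$ uniformly bounded, so if $\psi(c)$ were loxodromic the translation length of $\phi_i(c)^{n_i}\phi_i(w_i)$ would diverge, contradicting $\ell_{G_i}(c_i) \to 0$; hence $\psi(c)$ is parabolic and $c$ is a parabolic curve of $(G', \phi')$. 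Now let $c$ be a minimal component of $\lambda_\infty$ that is not a simple closed curve. For large $i$ the short geodesic representative of $c_i$ in $\hyperbolic^3/G_i$ lies in a Margulis tube which, via the approximate isometries of geometric convergence and the model map $f$, embeds into a Margulis tube, a $\integers$-cusp annulus, or a torus boundary of $\mathbf{M}$ above $e(S)$; since $c$ is not a simple closed curve, the $c_i$ cannot be eventually trapped in a single annulus or torus, so they must exit through a single simply degenerate or wild upper end $E$ of $\mathbf{M}$ whose minimal supporting subsurface contains $c$. Because $e(S)$ accesses $E$, the end $E$ is algebraic, and wild ends are not algebraic by item (5) of Theorem~\ref{Ohshika-Soma}, so $E$ is simply degenerate and its ending lamination coincides with $c$.

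The main obstacle is the spiralling subcase together with its counterpart in the ending-lamination case. Controlling the normal form $c_i = c^{n_i} w_i$ with bounded $w_i$ relies on $c$ being isolated as a component of $\lambda_\infty$, so that the $c_i$ approach only $c$ and not a further recurrent piece of lamination; and the assertion that short curves $c_i$ coherently exit through a single upper end of $\mathbf{M}$ uses the no-essential-annulus property (item (2) of Theorem~\ref{Ohshika-Soma}) to prevent their being freely homotopic into disparate upper features of $\mathbf{M}$.
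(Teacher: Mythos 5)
Your overall strategy diverges from the paper's (which, for the non-closed-curve components, simply quotes Theorem~11 of \cite{OhD}, and for the closed-curve components runs a case analysis on the position of $g'(\gamma)$ in the model manifold, ruling out the non-parabolic cases by subsurface-projection/hierarchy arguments applied directly to the $\lambda_i$). That in itself would be fine, but your route has two genuine gaps. The first is the spiralling subcase: the normal form $c_i = c^{n_i}w_i$ with \emph{uniformly bounded} $w_i$ is not available. You condition this on ``$c$ being isolated as a component of $\lambda_\infty$, so that the $c_i$ approach only $c$'', but that is not a hypothesis you can arrange: $\lambda_\infty$ typically has several minimal components (e.g.\ a boundary curve of the minimal supporting surface of $\lambda_i$ together with a non-closed-curve minimal piece), and the approximating curves $c_i$ then wrap around $c$ \emph{and} accumulate onto the other components, so $w_i$ is unbounded. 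With $w_i$ unbounded, $\ell_{G_i}(c^{n_i}w_i)\to 0$ gives no control on $\phi'(c)$. The standard repair is to normalise by intersection number with a small transversal $\tau$ to $c$, so that $c_i/i(c_i,\tau)$ converges in $\ML(S)$ to a measured lamination containing $c$ with weight one, and then invoke Brock's continuity of the length function on $\ML(S)\times AH(S)$ --- exactly the device the paper uses in the proof of Theorem~\ref{detecting limits} --- or else to follow the paper's model-manifold argument, which never needs such a normal form.

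The second gap is the assertion ``because $e(S)$ accesses $E$, the end $E$ is algebraic.'' This is false in general and is precisely the difficulty the whole geometric-limit machinery is designed to handle: the geometric limit of a sequence in a Bers slice typically has many simply degenerate and wild ends lying above the standard algebraic surface that are \emph{not} ends of the algebraic limit $\hyperbolic^3/G'$ (this is why cases (2) and (3) of Lemma~\ref{origin of parabolics} and the wild ends of Theorem~\ref{Ohshika-Soma} occur at all). If your short curves exit through a non-algebraic degenerate end, its ending lamination need not be an end invariant of $(G',\phi')$, and your conclusion does not follow. The paper avoids this by arguing by contradiction: if the component in question were carried by such an end, the projections of the $\lambda_i$ to the corresponding subsurface would Hausdorff-converge to something containing that end's ending lamination, which intersects the given component transversely. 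A smaller unaddressed point of the same flavour: the Hausdorff limit of your diagonal sequence $c_i$ may strictly contain $\lambda_\infty$ (the exiting short curves of a degenerate end limit onto the ending lamination \emph{plus} possible extra leaves), so one must check that this does not contaminate the components you are analysing.
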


\begin{proof}
The first half of what is claimed above is a special case of Theorem 11 in \cite{OhD}.
In the following, we shall show the second half by an argument  similar to  the one employed in \cite{OhD}.

Let $H$ be a geometric limit of $\{G_i\}$ after taking conjugates of $\phi_i$ so that $\{\phi_i\}$ converges to $\psi$ as representations.
Then $H$ contains $G'$ as a subgroup.
Let $\mathbf M$ be a model manifold of the non-cuspidal part $(\hyperbolic^3/H)_0$.
We regard $\mathbf M$ as embedded in $S \times [0,1]$ as in Theorem \ref{Ohshika-Soma}.
Let $g': S \rightarrow \mathbf M$ be a standard algebraic immersion defined in \S \ref{gl}.
Since $\{(G_i, \phi_i)\}$ lies on a Bers slice, $g'$ can be assumed to be a horizontal surface homotopic to $S \times \{0\}$ in $\mathbf M$.
Consider a simple closed curve $\gamma$ in the Hausdorff limit $\nu$ of $\{\lambda_i\}$.
As was shown in Theorem 11 in \cite{OhD}, $\gamma$ is disjoint from ending laminations and cannot intersect a parabolic curve transversely.

Suppose, seeking a contradiction, that $\gamma$ is not a parabolic curve.
By considering vertical projections inside $\mathbf M$ of simply degenerate ends and torus boundaries above $g'(S)$, we see that there remain only three possibilities. (a) $g'(\gamma)$ is homotopic in $\mathbf M$ into a geometrically finite end on $S \times \{1\}$. (b) $g'(\gamma)$ is homotopic into a simply degenerate end corresponding to $\Sigma \times \{t\}$ for some incompressible subsurface $\Sigma$.
(c) There are either a simply degenerate end or a horizontal annulus on a torus boundary, corresponding to $\Sigma \times \{t\}$, and  an incompressible subsurface $T$ of $S$ containing $\gamma$ with the following condition.
The horizontal surface $(\Sigma \cap T) \times \{t-\epsilon\}$ for small $\epsilon >0$ is vertically homotopic into $g'(T)$ in $\mathbf M$, and $\Sigma \cap T$ intersects $\gamma$ essentially when regarded as a subsurface of $T$.

First suppose that $g(\gamma)$ is homotopic into a geometrically finite end on $S \times \{1\}$ as in (a) above.
Let $\Sigma \times \{1\}$ be the subsurface corresponding to the end.
Since we are assuming $\gamma$ is not a parabolic curve, $\gamma$ is an essential (\ie non-contractible and non-peripheral) curve in $\Sigma$.
Consider subsurfaces of $\Sigma$ containing $\gamma$ whose images by $g'$ are homotopic into $\Sigma \times \{1\}$.
We take $T'$ which is maximal up to isotopy among such surfaces.
Since the model manifolds for $(\hyperbolic^3/G_i)_0$ converge geometrically to $\mathbf M$, this homotopy from $g'(T')$ to $\Sigma \times \{1\}$ can be pulled back to the model for $(\hyperbolic^3/G_i)_0$.
Therefore for sufficiently large $i$, the hyperbolic $3$-manifold $\hyperbolic^3/G_i$ has an upper geometrically finite end facing a surface containing $T'$, and the upper conformal structure at infinity on $T'$ converges to the restriction to $T'$  of the conformal structure on the geometrically finite end $\Sigma \times \{1\}$.
This in particular implies that there is neither parabolic curve nor ending lamination intersecting $\gamma$ for sufficiently large $i$.
This contradicts our assumption that $\gamma$ is contained in the Hausdorff limit of $\{\lambda_i\}$.

Now we turn to consider the second possibility (b) that $g'(\gamma)$ is homotopic into a simply degenerate end $e$ corresponding to  $\Sigma \times \{t\}$.
Also in this case, we take a subsurface $T'$ of $\Sigma$ containing $\gamma$ and homotopic into $\Sigma \times \{t\}$ which is maximal among such surfaces.
(See Figure \ref{figure 1}.)
Since $\gamma$ was assumed  to be non-parabolic, $\gamma$ is an essential curve in $\Sigma$.
By Corollary 4.18 of \cite{OhD}, a hierarchy $h_i$ for $(G_i, \phi_i)$ has a tight geodesic $\gamma_i$ supported on  a subsurface $\Sigma_i$ which corresponds to $\Sigma$ by a homeomorphism $f_i : S \rightarrow S$, whose homotopy class is determined by an approximate isometry from $B_{r_i}(\hyperbolic^3/G_i, x_i)$ to $B_{K_i r_i}(\hyperbolic^3/H, x_\infty)$ associated to the geometric convergence of $\{G_i\}$ to $H$.
The last vertex $v_i$ of $h_i$ is mapped to the curve $f_i(v_i)$ converging as $i \rightarrow \infty$ to the ending lamination of $e$, which we denote by $\mu_e$.
By Lemma 5.11 of Minsky \cite{Mi}, which is a generalisation of Lemma 6.2 of Masur-Minsky \cite{MaMi}, the projection of $\lambda_i$ to $\Sigma_i$ is within (universally) bounded distance from $v_i$ unless it is empty.
Since the Hausdorff limit of $\lambda_i$ contains $\gamma=f_i(\gamma)$, the projection of $\lambda_i$ to $\Sigma_i$ is not empty for large $i$.
Therefore, $f_i(\lambda_i)$ also converges in the Hausdorff topology to a geodesic lamination containing $\mu_e$.
On the other hand, since $f_i|T'$ is the identity and $\gamma$ is contained in the Hausdorff limit of $\{\lambda_i\}$, we see that $f_i(\lambda_i)$ converges in the Hausdorff topology to a geodesic lamination containing $\gamma$ as a leaf.
Since $\mu_e$ and $\gamma$ intersect transversely, this is a contradiction.

\begin{figure}
\scalebox{0.6}{\includegraphics{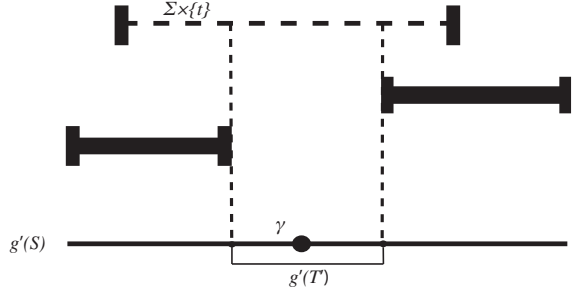}}
\caption{$g'(T')$ is a maximal surface which is homotoped into $\Sigma \times \{t\}$.}
\label{figure 1}
\end{figure}

Finally, let us consider the third possibility (c).
By the same argument as in the case (b) above, we see that there is a homeomorphism $f_i : S \rightarrow S$ determined by an approximate isometry such that $f_i|T$ is the identity and $f_i(\lambda_i)$ converges to a geodesic lamination containing either  the ending lamination of the end $\Sigma \times \{t\}$ when $\Sigma$ is not an annulus, or the core curve of $\Sigma$ when $\Sigma$ is an annulus.
By the same reason as above, this contradicts the assumption that $\gamma$ lies in the Hausdorff limit of $\lambda_i$.
Thus we have shown that none of the three possibilities can occur, and completed the proof.
\end{proof}

\section{Definition and basic properties of reduced Bers boundary}
\label{sec:basic}
We now define the reduced Bers boundaries formally.
\begin{definition}
For the Teichm\"{u}ller space $\mathcal T(S)$ of $S$, let $q_{m_0}: \mathcal T(S) \rightarrow AH(S)$ be the Bers embedding with basepoint at $m_0$.
Let $\bb \teich(S)$ be the frontier of $\Image (q_{m_0})$.
We introduce on $\bb \mathcal T(S)$ an equivalence relation $\sim$ such that two points $x,y \in \bb \teich(S)$ are $\sim$-equivalent if and only if they are quasi-conformally conjugate to each other.
We consider the quotient space $\bb \teich(S)/\sim$, which we call the {\sl reduced Bers boundary} with basepoint at $m_0$ and denote by $\rbb \teich(S)$.
We also consider the {\sl reduced Bers compactification} with basepoint at $m_0$, which is $\mathcal T(S) \cup \rbb \teich(S)$ endowed with the quotient topology induced from the Bers compactification $\mathcal T(S) \cup \bb \teich(S)$.
When it is clear from the context which Teichm\"{u}ller space we are talking about, we omit $\teich(S)$ and use the symbols $\rbb$ and $\bb$ for simplicity.
\end{definition}

\begin{definition}
We set $\UML_0(S)$ to be the subset of $\UML(S)$ consisting of unmeasured laminations $\lambda$ such that for each component $\lambda_0$ of $\lambda$ that is not a simple closed curve, every frontier component of the minimal supporting surface of $\lambda_0$ is contained in $\lambda$.
\end{definition}

By the invariance of ending laminations under quasi-conformal deformations and the ending lamination theorem proved by Brock-Canary-Minsky \cite{BCM}, we see that two points $(G_1, \phi_1)$ and $(G_2, \phi_2)$ in $\bb$ are $\sim$-equivalent if and only if their parabolic curves and ending laminations coincide.
Furthermore, every lamination in $\UML_0(S)$ is realised as the union of ending laminations and parabolic curves as was shown in Ohshika \cite{OhI}. 
Still, as we shall see below, the topology of $\rbb$ and the one on  $\UML_0(S)$ induced from the measured lamination space are different except for the special case when $\dim \teich(S)=2$.

\begin{definition}
\label{bijection}
Let $e: \rbb \rightarrow \UML_0(S)$ be a map taking  a point in $\rbb$ to  the union of all of its ending laminations and its parabolic curves.
As was remarked above, this map is a bijection.
\end{definition}

\begin{proposition}
\label{inverse is discontinuous}
The bijection $e : \rbb \rightarrow \UML_0(S)$ is not continuous if \linebreak$\dim \teich(S) >2$, \ie if $\xi(S) >4$.
\end{proposition}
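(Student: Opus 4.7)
The plan is to exploit the non-Hausdorff nature of both $\rbb$ and $\UML_0(S)$: I will produce points $x,y\in\rbb$ with $y\in\overline{\{x\}}$ (so the constant sequence at $x$ converges to $y$) while $e(y)\notin\overline{\{e(x)\}}$ in $\UML_0(S)$, which forces $e$ to be discontinuous at $y$. Since $\xi(S)>4$, I will choose a simple closed curve $c$ on $S$ such that some component $\Sigma$ of $S\setminus c$ satisfies $\xi(\Sigma)\ge 4$, together with a minimal filling ending lamination $\lambda$ on $\Sigma$, and a simple closed curve $\alpha\subset\Sigma$ which is essential and non-peripheral in $S$ (hence disjoint from $c$ and transverse to $\lambda$). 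I then set $x=[G_c]$ and $y=[G_{c,\lambda}]$, where $G_c$ is the regular b-group in the Bers slice based at $m_0$ whose sole parabolic curve is $c$ (and which has no ending laminations), and $G_{c,\lambda}$ is the b-group in the same slice with parabolic $c$ and ending lamination $\lambda$. Both $\{c\}$ and $c\cup\lambda$ lie in $\UML_0(S)$, since the frontier of the minimal supporting surface $\Sigma$ of $\lambda$ is exactly $c$.

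To show $y\in\overline{\{x\}}$ in $\rbb$, I will construct a sequence $H_k$ of b-groups in the quasi-conformal deformation class of $G_c$ that converges algebraically to $G_{c,\lambda}$ in $\bb$. This class is parametrised by the upper conformal structure and in particular by $\teich(\Sigma)$ on the $\Sigma$-side; I take $n_k\in\teich(\Sigma)$ which diverges in a direction whose Thurston limit is $\lambda$, and let $H_k$ be the corresponding b-group. By compactness of $\bb$ the sequence $H_k$ has a subsequential algebraic limit $H_\infty\in\bb$. The parabolic $c$ persists under algebraic limits, so $c$ is parabolic for $H_\infty$. Restricting $H_k$ to the $\pi_1(\Sigma)$-subgroup places the $\Sigma$-side within a Bers slice on $\Sigma$ with basepoint induced from $m_0$; as $n_k\to\lambda$ in the Thurston compactification of $\teich(\Sigma)$, Proposition~\ref{Hausdorff limit} (applied to $\Sigma$) forces $\lambda$ to be an ending lamination of $H_\infty$ on the $\Sigma$-side. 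Thus the end invariant of $H_\infty$ is $c\cup\lambda$, and the ending lamination theorem identifies $H_\infty$ with $G_{c,\lambda}$. Since $[H_k]=[G_c]$, continuity of the quotient map $\bb\to\rbb$ yields $[G_c]=[H_k]\to[G_{c,\lambda}]$ in $\rbb$.

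If $e$ were continuous, this would force the constant sequence $e([G_c])=c$ to converge to $e([G_{c,\lambda}])=c\cup\lambda$ in $\UML_0(S)$. To block this I use the set $V=\{\mu\in\ML(S):i(\mu,\alpha)>0\}$. It is open by continuity of intersection number and saturated (since $i(\cdot,\alpha)>0$ depends only on the support), so its image $W$ in $\UML(S)$ is open; moreover $i(c+\lambda,\alpha)=i(\lambda,\alpha)>0$ while $i(ac,\alpha)=0$ for every $a>0$, so the open neighbourhood $W\cap\UML_0(S)$ of $c\cup\lambda$ misses $c$, contradicting the required convergence. The main obstacle is the algebraic-convergence statement: that deforming the upper structure of $G_c$ along a sequence in $\teich(\Sigma)$ with Thurston limit $\lambda$ really produces $G_{c,\lambda}$ as algebraic limit. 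I would handle this by identifying the relevant stratum of $\bb$ with a sub-Bers slice on $\Sigma$ (with basepoint derived from $m_0$) and invoking the classical Bers convergence theorem on $\Sigma$ together with the ending lamination theorem.
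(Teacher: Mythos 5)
Your argument is correct, and its logical skeleton is the same as the paper's: when $\dim \teich(S)>2$ the fibre $e^{-1}(\{c\})$ over a single curve $c$ is a positive-dimensional quasi-conformal deformation space inside $\bb$, it accumulates on a group whose end invariant strictly contains $c$, and that larger invariant is separated from $c$ in $\UML_0(S)$ — so $e$ cannot be continuous. Where you differ is in the choice of accumulation point, and the paper's choice is markedly cheaper. The paper pinches a second simple closed curve $d$ disjoint from $c$, so the limit is a regular b-group with parabolics $c\cup d$; identifying this limit is classical (Abikoff / continuity of length), and the separation step reduces to the observation that $\{c\}$ is closed in $\UML_0(S)$ because the ray of weighted $c$'s is closed in $\ML(S)\setminus\{\emptyset\}$. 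You instead drive the upper conformal structure on a complementary subsurface $\Sigma$ to a filling lamination $\lambda$, which obliges you to argue that the $\pi_1(\Sigma)$-side of the deformation space behaves like a Bers slice of $\Sigma$, to pass from the Thurston limit to a Hausdorff limit of short pants curves before Proposition \ref{Hausdorff limit} applies, and to invoke the ending lamination theorem to identify the limit; you also leave implicit why no extra parabolics arise on the other complementary components (Abikoff's lemma, as used in the proof of Lemma \ref{separability}, handles this — or one can simply note that any subsequential limit whose invariant contains $c\cup\lambda$ already suffices for your intersection-number argument). Your open set $W=\{[\mu]\in\UML_0(S)\mid i(\mu,\alpha)>0\}$ is a correct substitute for the paper's closedness of $\{c\}$, so nothing is wrong — the proof just spends substantially more machinery than the statement requires.
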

\begin{proof}
Let $\pi: \bb \rightarrow \rbb$ and $p: \ML(S)\setminus \{\emptyset\} \rightarrow \UML(S)$ be projections.
Take some essential simple closed curve $c$, regarded as a point in $\UML_0(S)$.
Then the set $\{c\}$ consisting only of $c$ is a closed set since $p^{-1}(\{c\})$ is the set of all positively weighted $c$, which is closed in $\ML(S) \setminus \{\emptyset\}$.

Now, we consider $\pi^{-1}(e^{-1}(\{c\}))$.
This set consists of regular b-groups having only $c$ as a parabolic curve.
Under the assumption that $\dim \teich(S) >2$, this set is not closed since it is a non-trivial quasi-deformation space and has a limit outside $\pi^{-1}(e^{-1}(\{c\}))$, for instance a regular b-group having $d$ and $c$ as parabolic curves, where $d$ is an essential simple closed curve disjoint and homotopically distinct from $c$.
Thus we have shown that there is a closed set in $\UML_0(S)$ whose preimage by $e^{-1}$ is not closed, and hence $e$ is not continuous.
%
\end{proof}
We also see that the inverse of this map is not continuous either.

\begin{proposition}
\label{different topology}
The inverse  $e^{-1}: \UML_0(S) \rightarrow \Rbb$ is not continuous if $\dim \teich(S)>2$.
\end{proposition}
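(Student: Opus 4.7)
The plan is to mirror the proof of Proposition \ref{inverse is discontinuous}, reversing the roles of the two spaces: one exhibits a closed subset of $\Rbb$ whose image under $e$ fails to be closed in $\UML_0(S)$, which forbids continuity of $e^{-1}$. Since $\dim \teich(S)>2$ we have $3g-3+p\geq 2$, so one can fix two disjoint, homotopically distinct, essential simple closed curves $c$ and $d$ on $S$.

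The closed set to use is $F':=\{x\in\bb \mid d \text{ is a parabolic curve of } x\}$. One first checks that $F'$ is closed in $\bb$: parabolicity of $\phi(d)$ is the closed condition $\mathrm{tr}^2\phi(d)=4$, and the fact that algebraic limits in $AH(S)$ remain faithful and discrete ensures that the limiting element is again parabolic rather than trivial. Moreover, $F'$ is $\pi$-saturated because quasi-conformal conjugation preserves parabolicity of each loop. Hence $F:=\pi(F')$ is closed in $\Rbb$, and from the identification of $\sim$-classes by end invariants, $e(F)=\{\lambda\in\UML_0(S)\mid d \text{ is a component of } \lambda\}$.

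To see that $e(F)$ is not closed in $\UML_0(S)$, the plan is to use the constant sequence $\lambda_n:=[c\cup d]\in e(F)$. Choose the representatives $\mu_n:=c+(1/n)d\in\ML(S)$, each of support $c\cup d$; then $\mu_n\to c$ in $\ML(S)$, and applying the projection $p:\ML(S)\setminus\{\emptyset\}\to\UML(S)$ yields $[c\cup d]\to[c]$ in $\UML(S)$. Since $[c]\in\UML_0(S)$, the convergence also holds in the subspace $\UML_0(S)$. But $d\neq c$, so $[c]\notin e(F)$, and hence $e(F)$ is not closed. Continuity of $e^{-1}$ would force $(e^{-1})^{-1}(F)=e(F)$ to be closed, yielding a contradiction.

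No step is particularly delicate; the only thing needing a brief verification is that $F'$ is genuinely closed in $\bb$, which rests on the continuity of the trace function together with the standard fact that no nontrivial element of $\pi_1(S)$ can map to the identity in an algebraic limit of representations in $AH(S)$.
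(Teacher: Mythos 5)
Your proposal is correct and is essentially the paper's own argument in complementary form: the paper takes the open set $U\subset\Rbb$ of classes not having $d$ as a parabolic curve and shows $e(U)$ is not open because every neighbourhood of $[c]$ in $\UML_0(S)$ contains $[c\cup d]$, while you take the closed saturated complement $F$ and show $e(F)$ is not closed using the very same limit $c+(1/n)d\to c$ in $\ML(S)$. Both rest on the same two facts (parabolicity of a fixed curve is a closed, $\pi$-saturated condition in $\bb$, and $[c\cup d]$ lies in every neighbourhood of $[c]$), so this is the same proof up to passing to complements.
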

\begin{proof}
Let $S$ be a surface with $\dim \teich(S) > 2$.
Then $S$ contains  disjoint non-homotopic essential simple closed curves $c_1, c_2$.
They are regarded as points in $\UML_0(S)$.
These points $c_1$ and $c_2$ correspond to points $e^{-1}(c_1)$ and $e^{-1}(c_2)$ represented by regular b-groups which have only one parabolic curve, $c_1$ and $c_2$ respectively.
We shall show that there is an open neighbourhood $U$ of $e^{-1}(c_1)$ in $\rbb$ whose image under $e$  is not open in $\UML_0(S)$.

 Let $U$ be the set consisting of points in $\rbb$ represented by groups in $\bb$ which do not have $c_2$ as a parabolic curve.
Since having $c_2$ as a parabolic curve is a closed condition in $\bb$, the set $U$ is open in $\rbb$, and obviously it contains $e^{-1}(c_1)$.
Its  image $e(U)$ consists of unmeasured laminations in $\UML_0(S)$ which do not have $c_2$ as a leaf.
Now, in $\ML(S)$, any neighbourhood of $c_1$ contains a weighted union of the form $s_1 c_1 \cup s_2 c_2$ with $s_1$ close to $1$ and $s_2$ close to $0$.
Therefore, any neighbourhood of $c_1$ contains $c_1 \cup c_2$ in $\UML_0(S)$.
Since $c_1$ is contained in $e(U)$ whereas $c_1 \cup c_2$ is not, this shows that $e(U)$ cannot be open in $\UML_0(S)$, and we have completed the proof.
\end{proof}

\begin{remark}
In the case when $\dim \teich(S)=2$, there is no non-trivial quasi-conformal deformation space in $\bb$.
This means that $\bb=\rbb$ in this case.
We also see that $\PML(S)=\UML(S)$ since every measured lamination that is not a simple closed curve is uniquely ergodic  in this case.
The work of Minsky \cite{Mi} shows that $e$ is a homeomorphism then.
\end{remark}

Kerckhoff-Thurston showed in \cite{KT} that the Bers boundary $\bb$ depends on the basepoint $m_0$, and in particular that the action of the (extended) mapping class group on $\teich(S)$ does not extend continuously to $\bb$.
We shall show that in contrast, the reduced Bers boundary $\rbb$ does not depend on $m_0$, and the action of the extended mapping class group can be extended to $\teich(S) \cup \rbb$.

\begin{othertheorem}
\label{independence of basepoint}
Let $m_1, m_2$ be two points in $\teich(S)$.
Then there is a homeomorphism from $\Rbb_{m_1}$ to $ \Rbb_{m_2}$ which is an extension of the natural identification between $qf(\{m_1\} \times \teich(\bar S))$ and $qf(\{m_2\} \times \teich(\bar S))$.
\end{othertheorem}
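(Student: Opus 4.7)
My plan is to build the homeomorphism from end invariants and then verify continuity with the help of the model manifold machinery already quoted. For each basepoint $m$, Definition \ref{bijection} provides a bijection $e_m: \Rbb_m \to \UML_0(S)$ whose target is independent of $m$. Setting $F := e_{m_2}^{-1}\circ e_{m_1}$ gives a bijection $\Rbb_{m_1} \to \Rbb_{m_2}$ that pairs reduced-boundary classes having the same parabolic curves and ending laminations. On the interiors I use the natural identification $qf(m_1,n) \leftrightarrow qf(m_2,n)$ through the shared parameter $n \in \teich(\bar S)$. Together these define a bijection of the reduced Bers compactifications extending the interior identification; I will show it is continuous, after which the symmetry in $(m_1,m_2)$ gives continuity of the inverse and hence the claimed homeomorphism.

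Continuity on the interior is immediate. Continuity at a boundary point $X_\infty \in \Bb_{m_1}$ reduces, by the universal property of the quotient topology on $\Rbb_{m_1}$, to the following sequential criterion: for every sequence $Z_i \in \teich(\bar S) \cup \Bb_{m_1}$ with $Z_i \to X_\infty$ in the Bers compactification with basepoint $m_1$, letting $W_i$ be the corresponding point on the $m_2$ side (via the interior identification when $Z_i$ lies in the interior, or via any chosen representative of $F[Z_i]$ when $Z_i \in \Bb_{m_1}$), we have $[W_i] \to F[X_\infty]$ in $\Rbb_{m_2}$.

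To establish this, I would use compactness of the Bers compactification with basepoint $m_2$ to extract a subsequence $W_{i_k} \to Y' \in \Bb_{m_2}$ in $AH(S)$, and fix any representative $Y_\infty \in \Bb_{m_2}$ of $F[X_\infty]$. The key claim is $Y' \sim Y_\infty$. Granting this, for any open neighbourhood $U$ of $[Y_\infty]$ in $\Rbb_{m_2}$ the saturated preimage $\pi_{m_2}^{-1}(U) \subset \Bb_{m_2}$ is open and contains $Y'$, so $W_{i_k} \in \pi_{m_2}^{-1}(U)$ for all large $k$, and a standard contrapositive subsequence argument then upgrades to $[W_i] \to [Y_\infty] = F[X_\infty]$.

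The core step, and the principal obstacle, is showing $Y' \sim Y_\infty$. By the ending lamination theorem this amounts to proving that $Y'$ and $X_\infty$ share the same end invariants. Writing $E(\cdot)$ for the end invariant as an element of $\UML_0(S)$, I would apply Proposition \ref{end invariants} componentwise to the two convergences $X_{i_k} \to X_\infty$ in $\Bb_{m_1}$ and $W_{i_k} \to Y'$ in $\Bb_{m_2}$: since by construction $E(Z_{i_k}) = E(W_{i_k})$, every Hausdorff limit of a sequence of end-invariant components contributes identically to $E(X_\infty)$ and to $E(Y')$, giving matching upper bounds. The complementary realisation direction---every component of the end invariant of an algebraic limit is a Hausdorff limit of end-invariant components of the approximating sequence, following from continuity of length functions on $AH(S)$ together with the Margulis lemma for parabolic curves, and from Proposition \ref{Hausdorff limit} combined with the model manifold description of Theorem \ref{Ohshika-Soma} and Lemma \ref{origin of parabolics} for ending laminations---identifies $E(X_\infty)$ and $E(Y')$ each with this common set of Hausdorff-limit data, which depends only on $\{Z_i\}$ and not on the basepoint. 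Hence $E(X_\infty) = E(Y')$, so $Y' \sim Y_\infty$ and the argument closes.
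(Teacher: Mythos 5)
Your map $h$ and the reduction of its continuity to a sequential statement about $\pi_{m_1}^{-1}(h^{-1}(U))$ follow the paper's architecture, and your final subsequence argument is fine once the key claim $E(X_\infty)=E(Y')$ is established. The gap is in how you try to prove that claim. Your ``realisation direction'' --- that every component of the end invariant of the algebraic limit is a Hausdorff limit of end-invariant components of the approximating sequence --- is false. In the main case, where the $Z_i=qf(m_1,n_i)$ are interior points, the approximating groups have \emph{empty} end invariants, so the Hausdorff-limit data you propose to compare is vacuous; and even for boundary sequences the limit can acquire parabolics that are not Hausdorff limits of the $E(Z_i)$ (take $Z_i$ regular b-groups with a fixed parabolic curve $c$ whose conformal structure on a non-invariant component of $\Omega_{H_i}/H_i$ pinches a second curve $d$: then $E(Z_i)=\{c\}$ for every $i$ while $E(X_\infty)=\{c,d\}$). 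More fundamentally, the parabolic curves of the limit are precisely the part of the end invariant that Hausdorff-limit/Thurston-type data does not determine --- this is the point of the Kerckhoff--Thurston and Brock examples and of the intermediate-versus-extended ambiguity in Theorem \ref{detecting limits} --- so Proposition \ref{Hausdorff limit}, which does identify the ending laminations of $X_\infty$ and $Y'$ from the shared pants decompositions of $(S,n_i)$, cannot close the parabolic case.

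The paper supplies the missing input through geometric limits: Lemma \ref{same geometric limit} shows that the geometric limits of $\{qf(m_1,n_i)\}$ and $\{qf(m_2,n_i)\}$ are quasi-conformally conjugate, using the generalized Sullivan rigidity for geometric limits (Theorem \ref{generalised Sullivan}), and Corollary \ref{qc algebraic} transports this to the algebraic limits, giving $Y'\sim Y_\infty$ outright. You need this (or an explicit substitute, e.g.\ a uniform quasi-conformal comparison of translation lengths between $qf(m_1,n_i)$ and $qf(m_2,n_i)$ combined with Brock's continuity of the length function --- which is not the argument you wrote). A secondary but real issue: for boundary terms $Z_i$ you allow ``any chosen representative of $F[Z_i]$''; with arbitrary representatives the limit $Y'$ of the $W_i$ need not be equivalent to $Y_\infty$ (again, let the conformal structures on the non-invariant components degenerate), so your step ``$\pi_{m_2}^{-1}(U)$ contains $Y'$'' fails. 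You must normalize as the paper does, choosing uniformly quasi-conformal conjugating maps fixing $0,1,\infty$ so that their limit conjugates $X_\infty$ to $Y'$.
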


Since the algebraic convergence of $\{qf(m_0, f_*(n_i))\}$ in $\teich(S) \cup \rbb$ for an extended mapping class $f$ is equivalent to that of $\{qf(f^{-1}_*(m_0), n_i)\}$ in $\teich(S) \cup \Rbb_{f^{-1}_*(m_0)}$, 
once Theorem \ref{independence of basepoint} is proved, we have the following corollary.

\begin{corollary}
\label{mapping class group action}
The action of the extended mapping class group $\Gamma^*(S)$ on $\teich(S)$ extends continuously to $\teich(S) \cup \rbb$.
\end{corollary}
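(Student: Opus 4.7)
The plan is to combine the natural action of an extended mapping class $f \in \Gamma^*(S)$ on $AH(S)$ with the basepoint-change homeomorphism supplied by Theorem~\ref{independence of basepoint}. First I would define the extended map $\hat f$ on $\teich(S)\cup\rbb$: on $\teich(S)$ set $\hat f(n) = f_*(n)$; on $\rbb$, represent a point by some $(G,\phi)\in\bb$, apply the natural $\Gamma^*(S)$-action on $AH(S)$ (precomposing $\phi$ with $f^{-1}_*$) to obtain a b-group in $\Bb_{f_*(m_0)}$, and then pull back to $\rbb$ via the homeomorphism of Theorem~\ref{independence of basepoint}. Well-definedness on $\sim$-classes is immediate, because the representation-level action preserves quasi-conformal conjugacy.

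The crux of the continuity argument is the Ahlfors--Bers identity
\[
qf(m_0,\, f_*(n)) \;=\; f \cdot qf(f^{-1}_*(m_0),\, n),
\]
which says that the action of $f$ on the basepoint-$m_0$ Bers slice corresponds, via changing basepoint to $f^{-1}_*(m_0)$, to the natural action on $AH(S)$. To verify continuity at a boundary point, I would take $n_i \to q$ in $\teich(S)\cup\rbb$ and show $f_*(n_i) \to \hat f(q)$ there. By Theorem~\ref{independence of basepoint} the convergence $n_i \to q$ in $\teich(S)\cup\rbb$ is equivalent to $n_i \to q'$ in $\teich(S)\cup\Rbb_{f^{-1}_*(m_0)}$, where $q'$ is the image of $q$ under the basepoint-change homeomorphism. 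Using the displayed identity and the fact that the natural $\Gamma^*(S)$-action is a homeomorphism of $AH(S)$ sending $\Bb_{f^{-1}_*(m_0)}$ to $\bb$ and descending to the quotients, I obtain convergence of $qf(m_0, f_*(n_i))$ in $\teich(S)\cup\rbb$ with limit $\hat f(q)$.

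Applying the same argument to $f^{-1}$ provides a continuous inverse, so $\hat f$ is a homeomorphism. As the author's own sketch indicates, essentially all of the difficulty is absorbed into Theorem~\ref{independence of basepoint}; given that, the mapping class action extends formally, the main conceptual point being that $\Gamma^*(S)$ acts on $AH(S)$ in a way that permutes the Bers slices, and basepoint-independence of $\rbb$ is exactly what is needed to fold the resulting action back into a single slice.
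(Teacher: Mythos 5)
Your proposal is correct and follows essentially the same route as the paper: the author derives the corollary in one sentence from the identity $qf(m_0, f_*(n_i)) = f\cdot qf(f^{-1}_*(m_0), n_i)$ together with the basepoint-change homeomorphism of Theorem~\ref{independence of basepoint}, which is exactly the mechanism you spell out. Your write-up merely makes explicit the well-definedness and the two-sided continuity check that the paper leaves implicit.
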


To show Theorem \ref{independence of basepoint}, the following result in Ohshika-Soma \cite{OS} is essential.

\begin{othertheorem}
\label{generalised Sullivan}
Let $H$ be a geometric limit of a sequence of quasi-Fuchsian groups.
Then  every quasi-conformal deformation of $H$ is induced from a quasi-conformal deformation on $\Omega_H/H$.
\end{othertheorem}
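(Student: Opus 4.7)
The conclusion reduces to a measurable rigidity statement: every $H$-invariant measurable Beltrami differential $\mu$ on $\widehat{\complexes}$ with $\|\mu\|_\infty<1$ must vanish almost everywhere on $\Lambda_H$. Writing $\mu = \mu|_{\Omega_H} + \mu|_{\Lambda_H}$, the first summand descends tautologically to $\Omega_H/H$ and produces a qc-deformation there, so the content of the theorem is the assertion
\begin{equation*}
\mu|_{\Lambda_H} = 0 \text{ a.e.}
\end{equation*}
The plan is to prove this vanishing by partitioning $\Lambda_H$ into $H$-invariant pieces indexed by the structure of $(\hyperbolic^3/H)_0$ at infinity and killing $\mu$ separately on each.

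The organising tool is the bi-Lipschitz brick model $\mathbf{M}$ of $(\hyperbolic^3/H)_0$ supplied by Theorem \ref{Ohshika-Soma}. The line field on $\Lambda_H$ associated with $\mu|_{\Lambda_H}$ extends from the sphere at infinity to an $H$-equivariant measurable line field on $\hyperbolic^3$ and so descends to $\hyperbolic^3/H$. Each $x \in \Lambda_H$ is classified by the behaviour, under the bi-Lipschitz identification with $\mathbf{M}$, of a geodesic ray in $\hyperbolic^3/H$ toward $x$: the ray is either recurrent in the convex core or eventually exits an end of $\mathbf{M}$ of one of the three types in Theorem \ref{Ohshika-Soma}. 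This gives an $H$-invariant measurable decomposition $\Lambda_H = \Lambda_H^{rec} \sqcup \Lambda_H^{gf} \sqcup \Lambda_H^{sd} \sqcup \Lambda_H^w$ (up to null sets), and it suffices to show that $\mu$ vanishes on each piece.

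On $\Lambda_H^{rec}$, Poincar\'e recurrence and ergodicity of the geodesic flow on the non-wandering set, applied as in Sullivan's original argument, forces the extended line field to be constant along flow lines and therefore trivial. On $\Lambda_H^{gf}$, each geometrically finite end of $\mathbf{M}$ corresponds to a component of $\Omega_H/H$ by clause (4) of Theorem \ref{Ohshika-Soma}, and Sullivan's boundary-value argument reads any invariant Beltrami differential on the corresponding piece of the limit set off the already-chosen $\mu|_{\Omega_H}$ on the adjacent component of $\Omega_H$, giving vanishing. On $\Lambda_H^{sd}$, each simply degenerate end has an ending lamination realised by pleated surfaces contained in a fixed brick neighbourhood; combining the Thurston--Bonahon--Canary ergodicity of the associated horocyclic flow on the corresponding end (applied to a finitely generated surface-subgroup picture) with the bi-Lipschitz model to transfer the conclusion to $H$, one rules out a non-zero invariant line field there as well.

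The main obstacle is $\Lambda_H^w$, since a wild end has neither an ending lamination nor a conformal structure at infinity, so none of the classical ergodicity inputs is directly available. The crucial replacements are clauses (2) and (5) of Theorem \ref{Ohshika-Soma}: $\mathbf{M}$ contains no essential properly embedded annulus, and no half-open annulus tends into a wild end except into a boundary component. Together these prevent the dissipative ``escape along a tube'' that would obstruct ergodicity, and they guarantee that closed geodesics of $\hyperbolic^3/H$ whose lifts enter every neighbourhood of the wild end accumulate on $\Lambda_H^w$ densely. Under the geometric convergence $G_i \to H$ of quasi-Fuchsian approximations, the approximate isometries pull the extended line field back to give approximate $G_i$-invariant line fields on $\Lambda_{G_i}$; since the $G_i$ are quasi-Fuchsian, Sullivan's rigidity theorem (applied to each $G_i$, whose limit set is a Jordan curve of Lebesgue measure zero) forces those approximate fields to be zero. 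A measure-theoretic compactness argument, using the density of the closed geodesics exiting the wild end together with the equicontinuity provided by the bi-Lipschitz constants $K_i \to 1$, then transports the vanishing to $\Lambda_H^w$. Combining the four cases gives $\mu|_{\Lambda_H} = 0$ a.e., and hence the original qc-deformation is induced from $\Omega_H/H$, as claimed.
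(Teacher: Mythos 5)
First, a point of reference: the paper does not prove this statement at all --- it is quoted as a theorem of Ohshika--Soma \cite{OS}, where it is one of the main results and its proof is a substantial undertaking. So there is no internal proof to compare yours against; your proposal has to stand on its own as a reconstruction of that argument.

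As such it has a genuine gap, and the gap sits exactly where the real difficulty of the theorem lies: the non-conical part of $\Lambda_H$ contributed by the simply degenerate and, above all, the wild ends. Your reduction to killing an $H$-invariant Beltrami differential on $\Lambda_H$, and your treatment of the recurrent part via Sullivan's density-point argument, are standard and fine (Sullivan's theorem on the conical limit set holds for arbitrary Kleinian groups, finitely generated or not). But the proposed argument for $\Lambda_H^w$ does not work. You pull the line field back by the approximate isometries to obtain ``approximately $G_i$-invariant'' line fields on $\Lambda_{G_i}$ and then invoke Sullivan's rigidity for the quasi-Fuchsian groups $G_i$. Sullivan's theorem applies only to exactly invariant line fields, so it says nothing about approximately invariant ones; and, worse, each $\Lambda_{G_i}$ is a Jordan curve of Lebesgue measure zero, so the assertion that a measurable line field ``vanishes on $\Lambda_{G_i}$'' is vacuous and carries no information that any compactness argument could transport to $\Lambda_H^w$, which a priori may have positive measure --- the whole point being that the limit set can grow drastically under geometric limits. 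Similarly, for the simply degenerate ends you invoke Thurston--Bonahon--Canary ergodicity ``applied to a finitely generated surface-subgroup picture'', but the degenerate ends of the geometric limit are in general not ends of finitely generated subgroups of $H$ in a way that makes those theorems directly applicable; one has to redo the pleated-surface and nearest-point-retraction estimates inside the brick model. (A smaller issue: classifying a point of $\Lambda_H$ by ``the behaviour of a geodesic ray toward it'' is not well defined pointwise, since a non-recurrent ray need not eventually exit a single end; this can be repaired up to null sets, but it needs saying.) In short, the outline correctly identifies the strategy and correctly identifies the wild ends as the obstacle, but the step that is supposed to overcome that obstacle is not a proof, and it is precisely the content for which the paper defers to \cite{OS}.
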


Using this theorem, we shall prove the following lemma, which is a generalisation of Proposition 2.3 of Kerckhoff-Thurston \cite{KT}.

\begin{lemma}
\label{same geometric limit}
Let $m_1, m_2$ be two points in $\teich(S)$.
Set $(G_i^1, \phi_i)=qf(m_1, n_i)$ and $(G_i^2, \phi_i^2)=qf(m_2, n_i)$ and suppose that both $\{\phi_i^1\}$ and $\{\phi^2_i\}$ converge as representations.
Suppose moreover that $\{G_i^1\}$  converge geometrically to $H$.
Then $\{G_i^2\}$ converges geometrically to a quasi-conformal deformation of $H$ (without taking a subsequence).
\end{lemma}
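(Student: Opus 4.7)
The strategy is to construct uniformly $K$-quasi-conformal maps $f_i$ with $f_i G_i^1 f_i^{-1} = G_i^2$, pass to a subsequential limit $f_\infty$, and then invoke Theorem~\ref{generalised Sullivan} to eliminate the dependence on the chosen subsequence.

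First I would produce, for each $i$, a $K$-quasi-conformal homeomorphism $f_i : \widehat{\complexes} \to \widehat{\complexes}$ whose Beltrami differential is supported on the lower component of $\Omega_{G_i^1}$, with dilatation $K = e^{2 d_{\teich}(m_1, m_2)}$ independent of $i$, satisfying $f_i G_i^1 f_i^{-1} = G_i^2$ and $\phi_i^2 = f_i \phi_i^1 f_i^{-1}$. Such an $f_i$ arises as the equivariant extension of a fixed quasi-conformal map $(S, m_1) \to (S, m_2)$, which is possible precisely because $qf(m_1, n_i)$ and $qf(m_2, n_i)$ share the upper conformal structure $n_i$. Using the algebraic convergence of $\{\phi_i^1\}$ and $\{\phi_i^2\}$, I would conjugate inside $\PSL \complexes$ so that the $f_i$ have three common fixed points on $\widehat{\complexes}$; then $\{f_i\}$ is a normal family of $K$-quasi-conformal maps, and a subsequence $\{f_{i_j}\}$ converges uniformly on $\widehat{\complexes}$ to a $K$-quasi-conformal limit $f_\infty$. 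For any $\gamma \in H$, picking $g_{i_j} \in G_{i_j}^1$ with $g_{i_j} \to \gamma$ gives $f_{i_j} g_{i_j} f_{i_j}^{-1} \in G_{i_j}^2$ converging to $f_\infty \gamma f_\infty^{-1}$, and every accumulation point of a sequence drawn from the $G_{i_j}^2$ arises this way. Hence $\{G_{i_j}^2\}$ converges geometrically to the quasi-conformal deformation $f_\infty H f_\infty^{-1}$ of $H$.

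To eliminate the passage to a subsequence, suppose two subsequences yield limits $f_\infty^a, f_\infty^b$ with respective geometric limits $H'_a = f_\infty^a H (f_\infty^a)^{-1}$ and $H'_b = f_\infty^b H (f_\infty^b)^{-1}$. Both conjugate the algebraic limit $\psi^1$ of $\{\phi_i^1\}$ to the algebraic limit $\psi^2$ of $\{\phi_i^2\}$, and both arise as subsequential limits of equivariant extensions of the same fixed quasi-conformal map $(S, m_1) \to (S, m_2)$. Consequently the quasi-conformal deformations of $\Omega_H/H$ induced by $f_\infty^a$ and $f_\infty^b$ via the geometric convergence coincide: they modify only the component of $\Omega_H/H$ descending from the lower ($m_1$-side) components of the $\Omega_{G_i^1}$, and do so by the same conformal change. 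Theorem~\ref{generalised Sullivan} then asserts that a quasi-conformal deformation of $H$ is uniquely determined by its induced deformation on $\Omega_H/H$, up to a M\"obius normalisation; together with the three-point normalisation imposed on the $f_i$, this forces $f_\infty^a = f_\infty^b$, hence $H'_a = H'_b$, and so $\{G_i^2\}$ converges geometrically without passing to a subsequence.

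The main obstacle is this final step: identifying precisely which component of $\Omega_H$ arises in the geometric limit from the lower ($m_1$-side) components of $\Omega_{G_i^1}$, and verifying that the subsequential limits of the Beltrami differentials of the $f_i$ agree on that component, so that Theorem~\ref{generalised Sullivan} can be invoked to rigidify the quasi-conformal deformation of $H$ and close the argument.
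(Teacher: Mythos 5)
Your argument is essentially the paper's: the proof given there consists of citing Proposition 2.3 of Kerckhoff--Thurston and observing that the only place finite generation of $H$ was needed in that argument is Sullivan's rigidity theorem, which Theorem \ref{generalised Sullivan} replaces for infinitely generated geometric limits --- exactly the substitution you make. The step you flag as the remaining obstacle (that the invariant bottom components of $\Omega_{G_i^1}$ converge to a component of $\Omega_H$ on which both subsequential limits induce the same deformation, namely the fixed change from $m_1$ to $m_2$, and are conformal elsewhere) is precisely the content of Corollary 2.2 and the surrounding analysis in Kerckhoff--Thurston, which the paper imports by citation rather than reproving.
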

\begin{proof}
Proposition 2.3 of \cite{KT} showed this under the assumption that the geometric limit $H$ is finitely generated.
They needed this assumption only when they used Sullivan's rigidity theorem.
Since we have the same kind of rigidity by Theorem \ref{generalised Sullivan} even in the case when $H$ is infinitely generated, we can argue in the same way as in \cite{KT}.
\end{proof}

As a corollary of this lemma, we get the following.

\begin{corollary}
\label{qc algebraic}
In the situation of Lemma \ref{same geometric limit}, the algebraic limit of $\{(G^2_i, \phi^2_i)\}$ is a quasi-conformal deformation of that of $\{(G_i^1, \phi^1_i)\}$.
\end{corollary}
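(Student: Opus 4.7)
The plan is to extract from the construction proving Lemma \ref{same geometric limit} a quasi-conformal conjugacy that, by virtue of its definition, simultaneously takes the algebraic limit of $\{(G^1_i, \phi^1_i)\}$ to that of $\{(G^2_i, \phi^2_i)\}$.

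More concretely, I would start by constructing, for each $i$, a quasi-conformal self-map $w_i$ of $\hat{\complexes}$ whose Beltrami coefficient $\mu_i$ is supported on the lower invariant component of $\Omega_{G^1_i}$ and descends via the uniformization of this component by $G^1_i$ to a fixed Beltrami differential on $(S,m_1)$ representing the Teichm\"{u}ller deformation from $m_1$ to $m_2$. Since this single Beltrami differential governs the lift for every $i$, $\|\mu_i\|_\infty$ is independent of $i$, and hence the $w_i$ are uniformly $K$-quasi-conformal. By construction, $w_i G^1_i w_i^{-1}$ is a Kleinian group with lower conformal structure $m_2$ and upper $n_i$, hence conjugate to $G^2_i$ by a M\"{o}bius transformation. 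After absorbing this M\"{o}bius transformation into $w_i$, I would further normalize $w_i$ so that $w_i \phi^1_i(\gamma) w_i^{-1} = \phi^2_i(\gamma)$ holds on the nose for every $\gamma \in \pi_1(S)$.

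Next, I would extract a convergent subsequence of $\{w_i\}$. Since $\{\phi^1_i\}$ and $\{\phi^2_i\}$ both converge as representations to $\psi^1$ and $\psi^2$, respectively, I can choose loxodromic elements $\gamma_0,\gamma_1,\gamma_2\in\pi_1(S)$ whose images have fixed points in general position on $\hat{\complexes}$; then the triples $w_i(\mathrm{fix}(\phi^1_i(\gamma_j))) = \mathrm{fix}(\phi^2_i(\gamma_j))$ converge, which suffices to make the uniformly $K$-quasi-conformal family $\{w_i\}$ normal. By the argument used for Lemma \ref{same geometric limit}, the subsequential limit $w_\infty$ is a $K$-quasi-conformal map conjugating $H$ to the geometric limit $H'$ of $\{G^2_i\}$. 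Passing to the limit in the identity $w_i \phi^1_i(\gamma) w_i^{-1} = \phi^2_i(\gamma)$ then yields $w_\infty \psi^1(\gamma) w_\infty^{-1} = \psi^2(\gamma)$ for every $\gamma\in\pi_1(S)$, exhibiting the algebraic limit of $\{(G^2_i,\phi^2_i)\}$ as the quasi-conformal deformation of the algebraic limit of $\{(G^1_i,\phi^1_i)\}$ induced by $w_\infty$.

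The main potential obstacle is the bookkeeping of normalizations: a single sequence $\{w_i\}$ must simultaneously furnish the geometric conjugacy of Lemma \ref{same geometric limit} and intertwine the algebraic markings. Specifically, one must check that the M\"{o}bius adjustment enforcing $w_i \phi^1_i(\gamma) w_i^{-1} = \phi^2_i(\gamma)$ (rather than merely conjugating $G^1_i$ to $G^2_i$ up to M\"{o}bius equivalence) is compatible with the normalization used to secure geometric subconvergence, so that one common subsequence gives both limits at once. This compatibility is precisely what the hypothesis of algebraic convergence of both $\{\phi^1_i\}$ and $\{\phi^2_i\}$ provides.
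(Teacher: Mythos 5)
Your proof is correct, but it takes a genuinely different route from the paper's. The paper deduces the corollary from the \emph{conclusion} of Lemma \ref{same geometric limit}: letting $H'$ be the geometric limit of $\{G_i^2\}$, it uses Corollary 2.2 of Kerckhoff--Thurston to identify the algebraic limits $\Gamma$ and $\Gamma'$ with the subgroups of $H$ and $H'$ corresponding to the unique components of $\Omega_H/H$ and $\Omega_{H'}/H'$ homeomorphic to $S$, and then observes that the quasi-conformal conjugacy $H\rightarrow H'$ furnished by the lemma must carry the one distinguished component to the other and hence restricts to a conjugacy of $\Gamma$ onto $\Gamma'$. You instead re-run the Ahlfors--Bers construction at each finite stage, producing uniformly $K$-quasi-conformal maps $w_i$ on $\hat{\complexes}$ that intertwine the markings exactly, and pass to a normal-family limit at the level of representations. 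Your route is more self-contained: the identity $w_\infty\psi^1 w_\infty^{-1}=\psi^2$ already follows from taking limits in $w_i\phi^1_i(\gamma)w_i^{-1}=\phi^2_i(\gamma)$, so your appeal to the geometric-limit conjugacy of Lemma \ref{same geometric limit} is actually superfluous for the algebraic statement; moreover you track the markings explicitly, a point the paper's proof passes over in silence (it only produces a conjugacy of the groups $\Gamma$ and $\Gamma'$). What the paper's approach buys is brevity given the lemma, and the explicit compatibility of the algebraic conjugacy with the geometric one. One small point to nail down in your normalization step: choose the $\gamma_j$ to be non-simple, non-peripheral classes, so that $\psi^1(\gamma_j)$ and $\psi^2(\gamma_j)$ cannot be accidentally parabolic and are therefore loxodromic; then the two fixed-point triples converge to triples of distinct points and the limit $w_\infty$ is a genuine $K$-quasi-conformal homeomorphism rather than a degenerate limit.
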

\begin{proof}
Let $H'$ be the geometric limit of $\{G_i^2\}$.
Since the lower conformal structure of $\{G_i^1\}$ is constant, as was shown in Corollary 2.2 of Kerckhoff-Thurston \cite{KT}, $\Omega_H/H$ has a unique component that is conformal to $(S, m_1)$, which we denote by $S_1$.
In the same way, we see that $\Omega_{H'}/H'$ also has a unique component conformal to $(S,m_2)$, which we denote by $S_2$. 
The algebraic limits of $\{(G_i^1,\phi^1_i)\}$ and $\{(G_i^1, \phi^2_i)\}$, which we denote by $\Gamma$ and $\Gamma'$ respectively, are subgroups of $H$ and $H'$ corresponding to $\pi_1(S_1)$ and $\pi_1(S_2)$ respectively.
A quasi-conformal deformation of $H$ to $H'$, which is guaranteed to exist by Lemma \ref{same geometric limit}, induces that of  $\Gamma$ to $\Gamma'$ since $S_1$ and $S_2$ are characterised by the condition that they are homeomorphic to $S$ among the components of $\Omega_H/H$ and $\Omega_{H'}/H'$.
Therefore, the algebraic limit of $\{(G_i^1, \phi^2_i)\}$ is quasi-conformally conjugate to that of $\{(G_i^1,\phi^1_i)\}$.
\end{proof}

Now we start the proof of Theorem \ref{independence of basepoint}.

\begin{proof}[Proof of Theorem \ref{independence of basepoint}]
We define $h: \mathcal T(S) \cup \Rbb_{m_1} \rightarrow \mathcal T(S) \cup \Rbb_{m_2}$ to be the identity on $\mathcal T(S)$ and to take a point in $\Rbb_{m_1}$ to the one having the same parabolic curves and ending laminations in $\Rbb_{m_2}$ which is obtained by changing the lower conformal structure from $m_1$ to $m_2$.
This map is obviously bijective.
We shall show that $h$ is continuous.

We shall first show that $h|\Rbb_{m_1}$ is continuous.
Let $F$ be a closed set in $\Rbb_{m_2}$.
By our definition of $h$, we have $e(h^{-1}(F))=e(F)$, for the bijection $e$ defined in the previous section.
This implies that if a point in $\Rbb_{m_1}$ has the same parabolic curves and ending laminations as a point in $F$, then it must be contained in $h^{-1}(F)$.
Setting $\pi_{m_1}: \teich(S) \cup \Bb_{m_1} \rightarrow \teich(S) \cup \Rbb_{m_1}$ and $\pi_{m_2}: \teich(S) \cup \Bb_{m_2} \rightarrow \teich(S) \cup \Rbb_{m_2}$ to be projections, we consider the preimage $\tilde F_2=\pi_{m_2}^{-1}(F)$, which is a closed set in $\Bb_{m_2}$.
By our definition of $h$, the preimage $\tilde F_1= \pi_{m_1}^{-1} h^{-1}(F)$ consists of b-groups such that if we change their  lower conformal structure from $m_1$ to $m_2$, then they are contained in $\tilde F_2$.

What we have to prove is that $\tilde F_1$ is closed.
Since $\Bb_{m_1}$ is metrisable, we have only to show that any sequence in $\tilde F_1$ that converges in $\Bb_{m_1}$ has limit in $\tilde F_1$.
Let $\{(H_i, \psi_i)\}$ be a sequence in $\tilde F_1$ such that $\{\psi_i\}$ converges as representations, and $(H_\infty, \psi_\infty)$ its algebraic limit in $\Bb_{m_1}$.
As was remarked above, for each $(H_i, \psi_i)$, there is a group $(H_i', \psi_i') \in \tilde F_2$ which is obtained by changing the lower conformal structure from $m_1$ to $m_2$.
Let $f_i: \hat \complexes \rightarrow \hat \complexes$ be a quasi-conformal homeomorphism conjugating $H_i$ to $H_i'$.
By choosing $(H_i', \psi'_i)$ so that  $f_i$ fixes three points $0,1$ and $\infty$, we can assume that $\{\psi_i'\}$ also converges as representations.
Since the $f_i$ are uniformly quasi-conformal, passing to a subsequence, they converge to a quasi-conformal homeomorphism which conjugates $(H_\infty, \psi_\infty)$ to an algebraic limit $(H'_\infty, \psi_\infty)$ of $\{(H_i', \psi')\}$.
Since $\tilde F_2$ is closed, $(H'_\infty, \psi_\infty)$ is contained in $\tilde F_2$.
Since $(H_\infty, \psi_\infty)$ is a quasi-conformal deformation of $(H'_\infty, \psi_\infty)$, they have the same parabolic curves and ending laminations.
This shows that $(H_\infty, \psi_\infty)$ is contained in $\tilde F_1$ as was remarked before.
Thus we have shown that $h|\Rbb_{m_1}$ is continuous.
By the same argument, after interchanging the roles of $m_1$ and $m_2$, we can also show that $h^{-1}|\Rbb_{m_2}$ is continuous, hence that $h|\Rbb_{m_1}$ is a homeomorphism onto $\Rbb_{m_2}$.

Next we shall show that for any open set $U$ in $\teich(S) \cup \Rbb_{m_2}$, its preimage $h^{-1}(U)$ is open in $\teich(S) \cup \Rbb_{m_1}$.
Since $\teich(S) \cup \Rbb_{m_1}$ has the quotient topology, what we have to show is that $\pi_{m_1}^{-1}(h^{-1}(U))$ is open in $\teich(S) \cup \Bb_{m_1}$.
Suppose, seeking a contradiction, that this set is not open.
Since $h$ is the identity in $\teich(S)$, we see that $\pi_{m_1}^{-1} (h^{-1}(U\cap \teich(S)))$ is open.
Also, since $h|\Rbb_{m_1}$ is continuous as was shown above, $\pi_{m_1}^{-1}(h^{-1}(U \cap \Rbb_{m_2}))$ is open in $\Bb_{m_1}$.
Therefore, the only possibility for $\pi_{m_1}^{-1}(h^{-1}(U))$ not to be open is that there is a sequence of points $\{p_i\}$ in $\teich(S)$ which is not contained in $\pi_{m_1}^{-1}(h^{-1}(U))$ but converges to a point $p_\infty$ in $\pi_{m_1}^{-1}(h^{-1}(U \cap \Rbb_{m_2}))$.

Passing to a subsequence, we can assume that $\{h(p_i)=p_i\}$ also converges to a point $p'_\infty$ in $\teich(S) \cup \Bb_{m_2}$.
Since the $p_i$ are not contained in $\pi_{m_2}^{-1}(U \cap \teich(S))=\pi_{m_1}^{-1}(h^{-1}(U \cap \teich(S)))$ and $\pi_{m_2}^{-1}(U)$ is open, we see that $p'_\infty$ is not contained in $\pi_{m_2}^{-1}(U)$.
Now, Corollary  \ref{qc algebraic} implies that $p'_\infty$ is a quasi-conformal deformation of $p_\infty$.
By the definition of $h$, this means that $\pi_{m_2}(p'_\infty)$ coincides with $h(\pi_{m_1}(p_\infty))$.
Since $p_\infty$ is contained in $\pi_{m_1}^{-1}(h^{-1}(U \cap \Rbb_{m_2}))$, we see that $\pi_{m_2}(p'_\infty)=h(\pi_{m_1}(p_\infty))$  must be contained in $U \cap \Rbb_{m_2}$.
This contradicts what has been proved above.

Thus we have shown that $h$ is continuous in $\teich(S) \cup \Rbb_{m_1}$.
By interchanging the roles of $m_1$ and $m_2$, we see that $h^{-1}$ is also continuous, which completes the proof.
\end{proof}

\section{Rigidity of automorphisms}
\label{sec:rigidity}
Papadopoulos proved in \cite{Pa} that there is a dense subset $D$ of $\UML(S)$ in which  any auto-homeomorphism  of  $\UML(S)$ is induced from the action of an extended mapping class if $\xi(S) >4$.
Also, the action on $D$ determines an element of the mapping class group uniquely except for the case when $S$ is a closed genus $2$ surface.
After proving the same kind of result for $\rbb$, we shall further show that any auto-homeomorphism of $\rbb$ is induced from an extended mapping class even if we do not restrict it to a dense subset.

The results in this section are independent of the choice of the basepoint $m_0$.
Therefore, we shall use symbols like $\Bb$ and $\Rbb$, omitting $m_0$.

\begin{othertheorem}
\label{rigidity}
Suppose that $\xi(S) > 4$ (\ie $\dim \teich(S) > 2$).
Let $f: \Rbb \rightarrow \Rbb$ be a homeomorphism.
Then there exists a diffeomorphism $h: S \rightarrow S$ which induces $f$ on $\Rbb$.
Furthermore, unless $S$ is a closed surface of genus $2$,  two diffeomorphisms $h, h' : S \rightarrow S$ inducing the same homeomorphism on $\Rbb$ are isotopic.
\end{othertheorem}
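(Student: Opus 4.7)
The plan is to reduce the theorem to the rigidity of the curve complex, due to Ivanov, Korkmaz, and Luo. Identify $\Rbb$ with $\UML_0(S)$ via the bijection $e$ of Definition~\ref{bijection}, and consider the specialisation preorder on $\Rbb$ defined by $[\mu]\preceq[\nu]$ iff $[\nu]\in\overline{\{[\mu]\}}$. By the argument in the proof of Proposition~\ref{inverse is discontinuous}, together with the observation that every $\nu\supseteq\mu$ in $\UML_0(S)$ is realised as the end invariant of a b-group in the closure of the quasi-conformal deformation class of $[\mu]$, this preorder corresponds exactly to inclusion: $[\mu]\preceq[\nu]$ iff $\mu\subseteq\nu$. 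Every homeomorphism of $\Rbb$ preserves this order.

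\emph{Recovering the curve complex.} The minimal elements of $(\UML_0(S),\subseteq)$ are precisely the single simple closed curves and the filling minimal laminations on $S$, and the two families are distinguished by whether $\overline{\{[p]\}}$ is nontrivial: for any curve $c$ one has $[c\cup d]\in\overline{\{[c]\}}\setminus\{[c]\}$ for any disjoint curve $d$, whereas a filling lamination $\lambda$ is maximal so $\overline{\{[\lambda]\}}=\{[\lambda]\}$. Two curves $c,d$ are disjoint iff $\overline{\{[c]\}}\cap\overline{\{[d]\}}\neq\emptyset$. Hence $f$ induces a simplicial automorphism of $\CC(S)$, which by Ivanov-Korkmaz-Luo is realised by some $h\in\Gamma^{*}(S)$, unique up to the hyperelliptic involution in the closed genus-$2$ case. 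The uniqueness clause of the theorem is inherited directly: two diffeomorphisms inducing the same homeomorphism of $\Rbb$ in particular induce the same simplicial automorphism of $\CC(S)$.

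\emph{Identity of $g:=(h_{*})^{-1}\circ f$.} By construction $g$ fixes every curve in $\Rbb$ and preserves $\preceq$. For any multi-curve $M=c_{1}\cup\cdots\cup c_{k}$, the relations $g([c_{i}])=[c_{i}]\preceq g([M])$ force $g([M])\succeq[M]$, and the analogous argument for $g^{-1}$ gives $g([M])=[M]$. For a general $[\mu]$ with $\mu=C\cup\bigcup_{j}\lambda_{j}$ (curves $C$ plus ending laminations $\lambda_{j}$ filling subsurfaces $\Sigma_{j}$ with $\partial\Sigma_{j}\subseteq C$), approximate by multi-curves: pick a pseudo-Anosov $\psi_{j}$ on $\Sigma_{j}$ with attracting lamination $\lambda_{j}$ and a reference curve $c_{0}^{j}\subset\Sigma_{j}$, and set $P_{n}:=C\cup\bigcup_{j}\psi_{j}^{n}(c_{0}^{j})$. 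Then $P_{n}$ Hausdorff-converges in $(S,m_{0})$ to the geodesic lamination $\mu$. Choosing representatives of $[P_{n}]$ in $\Bb$ that converge (by compactness of $\Bb$), Proposition~\ref{end invariants} identifies the end invariant of the limit as $\mu$, so $[P_{n}]\to[\mu]$ in $\Rbb$. Continuity of $g$ combined with $g([P_{n}])=[P_{n}]$ yields $[P_{n}]\to g([\mu])$ as well.

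\emph{The main obstacle} is to deduce $g([\mu])=[\mu]$ from this, given that $\Rbb$ is not Hausdorff and limits need not be unique. The plan is to prove that the set of limits of $\{[P_{n}]\}$ in $\Rbb$ is exactly the specialisation upper-set $\{[\nu]:\nu\supseteq\mu\}$. One inclusion is immediate: if $\nu\supseteq\mu$, any open neighbourhood of $[\nu]$ contains $[\mu]$, hence the tail of $\{[P_{n}]\}$. For the reverse, if $[\nu]\not\succeq[\mu]$ one must separate $[\nu]$ from the tail by an open set. When $\mu\setminus\nu$ contains a curve $c$, the open set $\Rbb\setminus\overline{\{[c]\}}$ suffices since $c\in C\subseteq P_{n}$; when $\nu$ and $\mu$ agree on their curves but differ on an ending lamination component, one works in $\Bb$, where by Proposition~\ref{end invariants} the accumulation of $(G_{n},\phi_{n})$ is confined to the quasi-conformal class of $[\mu]$, so an open separator around $[\nu]$'s QC-class can be chosen disjointly from the tail and then saturated---this is the technical crux, as QC-classes need not be open in $\Bb$. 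Granting the claim, $g([\mu])\succeq[\mu]$, and the analogous argument for $g^{-1}$ yields $g([\mu])=[\mu]$, completing the proof.
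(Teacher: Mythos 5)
Your overall architecture is the same as the paper's: recover the curve complex from the topology of $\Rbb$, apply Ivanov--Korkmaz--Luo, and then show that agreement on the multicurve points determines the homeomorphism everywhere by approximating a general point by multicurves. (Your route to the curve complex via minimal elements of the specialisation order with nontrivial closure is a clean variant of the paper's adherence-height computation, and your order-theoretic bookkeeping at the end is fine.) However, there are two genuine gaps, one of which you flag yourself.

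First, the step you call the technical crux is exactly the content the proof cannot do without, and you leave it unresolved. To conclude $g([\mu])\supseteq\mu$ you must show that every limit point $[\nu]$ of $\{[P_n]\}$ satisfies $\nu\supseteq\mu$; the case where $\nu$ contains all the curves of $\mu$ but misses an ending lamination component is precisely where the complement of a point-closure does not suffice, and ``saturating an open separator around the QC-class'' is not an argument. The paper supplies the missing open sets explicitly: for $\Lambda=e(d)$ it builds $V_{\epsilon,K}(\Lambda)$, consisting of points whose end invariants either sit inside an $\epsilon$-neighbourhood of $\Lambda$ or cross it in arcs of length $>K$, and proves these are open (Lemma~\ref{open sets}) using Proposition~\ref{end invariants}; letting $\epsilon\to 0$, $K\to\infty$ then forces each component of $e(d)$ to be swallowed by the Hausdorff limits of the components of $P_n$, which is only possible if $e(d)\supseteq\mu$ (Lemma~\ref{separability}). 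Some such quantitative neighbourhood construction is needed; without it the separation claim is an assertion, not a proof.

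Second, your claim that ``Proposition~\ref{end invariants} identifies the end invariant of the limit as $\mu$'' overstates what that proposition gives: it only says that components of the Hausdorff limit of the $P_n$ appear in the end invariant of the algebraic limit, i.e.\ $e(\lim)\supseteq\mu$, not the reverse. With a careless choice of representatives $(G_n,\phi_n)\in\Bb$ the limit can acquire extra parabolics or ending laminations, and then $[\mu]$ need not be a limit point of $\{[P_n]\}$ at all (only some $[\nu_0]\supsetneq[\mu]$ would be), which breaks the forward inclusion of your limit-set claim. The paper avoids this by choosing the regular b-groups representing $[P_n]$ so that the conformal structure on the complement of the curves and of the supporting surfaces $\Sigma_j$ is independent of $n$, and then invoking Abikoff's lemma to show the complementary pieces survive in $\Omega_{G_\infty}/G_\infty$, so no new end invariants appear. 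You should add both of these ingredients.
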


As was mentioned above, to prove this theorem, we shall first consider a dense subset of $\Rbb$ and show that any auto-homeomorphism coincides with the action of an extended mapping class  on this set.

\begin{definition}
Let $\regb$ be the subset of $\Rbb$ consisting of equivalence classes represented by regular b-groups.
\end{definition}

By McMullen's theorem \cite{McA}, $\regb$ is dense in $\Rbb$, and this is the dense set on which we shall first show that the action is induced from an extended mapping class.
In \cite{Pa}, Papadopoulos used the notion of adherence degree to characterise foliations of Jenkins-Strebel type to determine the number of non-parallel minimal components.
The topology of $\Rbb$ has  different non-separability from $\UML(S)$, and we need to use the following notion of adherence height instead of the adherence degree.

\begin{definition}
\label{adherence height}
A point $b$ in $\Rbb$ is said to be {\sl unilaterally adherent} to $a$ in $\Rbb$ if every neighbourhood of $b$ contains $a$.
(We are not excluding the possibility that $a$ is also unilaterally adherent to $b$ although we say \lq\lq unilaterally".
We put this adverb to distinguish our definition from that of \lq  \lq adherence" by Papadopoulos \cite{Pa}, which is symmetric with regard to $a$ and $b$.)
Let $T=(a_0, \dots , a_n)$ be an ordered subset of $\Rbb$.
The set $T$ is said to be an {\sl adherence tower} if  $a_j$ is unilaterally adherent to $a_1, \dots , a_{j-1}$, and we call $n$ the length of $T$.
We define the {\sl adherence height} of $a \in \Rbb$ to be the supremum of the lengths of the adherence towers starting from $a$.
We denote the adherence height of $a$ by $\ah(a)$.
\end{definition}

From this definition, we obtain the following  immediately.
\begin{lemma}
\label{preserving ah}
Let $f: \Rbb \rightarrow \Rbb$ be a homeomorphism.
Then for any point $a \in \Rbb$, we have $\ah(f(a))=\ah(a)$.
\end{lemma}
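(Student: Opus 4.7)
The plan is to observe that unilateral adherence is a purely topological relation and is therefore preserved by any homeomorphism. Concretely, $b$ is unilaterally adherent to $a$ precisely when $a$ lies in the intersection of every neighbourhood of $b$; if $f$ is a homeomorphism, then for any neighbourhood $V$ of $f(b)$ the preimage $f^{-1}(V)$ is a neighbourhood of $b$, hence contains $a$, so $V$ contains $f(a)$. Thus $f(b)$ is unilaterally adherent to $f(a)$, and the converse direction follows by applying the same observation to $f^{-1}$.

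From this I would deduce that the image of an adherence tower $(a_0,\dots,a_n)$ under $f$ is itself an adherence tower $(f(a_0),\dots,f(a_n))$ of the same length, and similarly under $f^{-1}$. Therefore $f$ induces a length-preserving bijection between adherence towers starting from $a$ and adherence towers starting from $f(a)$, so the suprema of their lengths agree. This gives $\ah(f(a))=\ah(a)$.

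There is no substantive obstacle here; the statement is essentially a tautology once one notes that the definition of unilateral adherence refers only to the open-set structure of $\Rbb$. The only thing worth flagging is a minor notational point: as written in Definition \ref{adherence height}, an adherence tower requires $a_j$ to be unilaterally adherent to $a_1,\dots,a_{j-1}$, which is preserved element-by-element under a homeomorphism, so no combinatorial subtlety enters. Accordingly I would present the argument as a two-line remark rather than as a formal proof.
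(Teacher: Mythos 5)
Your argument is correct and is exactly what the paper intends: the paper gives no written proof, stating only that the lemma follows immediately from the definition, and your observation that unilateral adherence is defined purely in terms of neighbourhoods (hence preserved by $f$ and $f^{-1}$, so towers map to towers of equal length) is that immediate argument spelled out. Nothing further is needed.
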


To show that the adherence height of any point is finite and that it characterises points in $\regb$, the following lemma is essential.

\begin{lemma}
\label{adherence}
A point $b \in \Rbb$ is unilaterally adherent to  $a \in \Rbb$ if and only if $e(b)$ contains $e(a)$.
\end{lemma}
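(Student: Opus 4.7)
The plan is to prove both directions via the bijection $e \colon \Rbb \to \UML_0(S)$ together with the quotient structure $\pi \colon \Bb \to \Rbb$. For the forward direction I will argue the contrapositive: assuming some $\lambda_0 \in e(a) \setminus e(b)$, I will produce an open neighborhood of $b$ in $\Rbb$ missing $a$. Consider the set $F_{\lambda_0} = \{(G,\phi) \in \Bb : \lambda_0 \subset e(\pi(G,\phi))\}$. Since quasi-conformal deformations preserve parabolic curves and ending laminations, $F_{\lambda_0}$ is $\sim$-saturated. To see it is closed in $\Bb$, apply Proposition \ref{end invariants} to the constant sequence $\lambda_i \equiv \lambda_0$: its Hausdorff limit is $\lambda_0$ itself, and whichever of the two cases $\lambda_0$ falls into (a simple closed curve or a minimal non-closed lamination), the proposition places it in the end invariant of the algebraic limit. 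Thus $\pi(\Bb \setminus F_{\lambda_0})$ is an open subset of $\Rbb$ containing $b$ but not $a$, contradicting unilateral adherence.

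For the converse, suppose $e(a) \subset e(b)$. Since $\Bb$ is metrizable, it suffices to construct a sequence $(G_n,\phi_n) \in \pi^{-1}(a)$ converging algebraically to some representative of $b$: any saturated open neighborhood of $\pi^{-1}(b)$ will then contain cofinitely many $(G_n,\phi_n)$ and, by saturation, all of $\pi^{-1}(a)$, giving the required neighborhood of $b$ in $\Rbb$ that contains $a$. Fix a representative $(G_a,\phi_a) \in \pi^{-1}(a)$. Its quasi-conformal deformation space is parameterized by Teichm\"uller spaces of the non-invariant components of $\Omega_{G_a}/G_a$, namely the subsurfaces $\Sigma_1,\dots,\Sigma_r$ of $S$ delimited by the parabolic curves of $a$ and the minimal supporting surfaces of its ending laminations. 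Since $e(b) \setminus e(a)$ is disjoint from $e(a)$, it decomposes across the $\Sigma_j$. On each $\Sigma_j$ I choose a sequence of conformal structures whose shortest pants decompositions Hausdorff-converge to a geodesic lamination whose simple closed curve leaves are exactly the new parabolic curves of $e(b) \cap \Sigma_j$ and whose minimal non-closed components are exactly the new ending laminations of $e(b) \cap \Sigma_j$. Form the resulting QC-deformation sequence $(G_n,\phi_n) \in \pi^{-1}(a)$, pass to a convergent subsequence, and let $(G_\infty,\phi_\infty) \in \Bb$ be its algebraic limit. By Propositions \ref{Hausdorff limit} and \ref{end invariants} applied on each $\Sigma_j$, combined with the QC-invariance of the end invariants inherited from $a$, one obtains $e(\pi(G_\infty,\phi_\infty)) \supset e(b)$. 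Choosing the pinching sequences to avoid accidental further degeneration yields equality, and the bijectivity of $e$ forces $\pi(G_\infty,\phi_\infty) = b$.

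The main technical obstacle is this final step: ensuring that the algebraic limit has end invariant exactly $e(b)$ and not a strictly larger element of $\UML_0(S)$. This requires choosing each $\sigma_j^{(n)} \in \teich(\Sigma_j)$ so that no essential simple closed curve outside the prescribed target collapses and no unintended laminations appear in the Hausdorff limit of its shortest pants, and then translating this per-factor control into control on the end invariant of the joint algebraic limit. Doing so demands a careful balancing of the degeneration rates across the factors and an upper-bound companion to Proposition \ref{end invariants} constraining what can appear in the limit beyond the deliberately prescribed laminations.
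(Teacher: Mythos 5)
Your ``only if'' direction is sound and close in spirit to the paper's: you exhibit, for each component $\lambda_0$ of $e(a)\setminus e(b)$, a saturated closed set $F_{\lambda_0}$ whose complement projects to an open neighbourhood of $b$ missing $a$. The paper gets the closedness of the analogous sets more directly (closedness of the set of parabolic elements of $\PSL\complexes$ for parabolic curves, and Brock's continuity of the length function for ending laminations), whereas you deduce it from Proposition \ref{end invariants} applied to a constant sequence; both routes are legitimate, and yours is a harmless repackaging.

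The ``if'' direction, however, has a genuine gap, and you have correctly located it yourself: your construction only guarantees, via Propositions \ref{Hausdorff limit} and \ref{end invariants}, that the end invariant of the algebraic limit \emph{contains} $e(b)$, and you have no mechanism to rule out that it is strictly larger. These propositions bound the end invariant of a limit from below, never from above; in particular nothing in them prevents accidental parabolics from appearing in the limit (this is exactly the ``ambiguity on parabolic curves'' that the paper itself cannot eliminate in Theorem \ref{detecting limits}). ``Choosing the pinching sequences to avoid accidental further degeneration'' and ``a careful balancing of the degeneration rates'' is not an argument but a restatement of the problem: producing a sequence of quasi-conformal deformations of a given b-group whose algebraic limit has a \emph{prescribed} end invariant is a nontrivial realization theorem. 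The paper does not attempt this construction by hand; it invokes the main theorem of \cite{OhL}, which supplies precisely such a convergent sequence of quasi-conformal deformations of $(G,\phi)$ realizing the enlarged system of parabolic curves and ending laminations, and then uses the ending lamination theorem of \cite{BCM} to identify the limit with a representative of $b$. Without citing that result (or reproving it), your converse direction does not close.
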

\begin{proof}
We shall first show the \lq\lq only if" part.
Let $(G,\phi)$ be a marked Kleinian group in $\Bb$ representing $a$, and $(\Gamma, \psi)$ one representing $b$.
Let $g$ be a parabolic curve for $a$.
We define a subset $U_g$ to be $\{(H, \xi) \in \Bb \mid \xi([g]) \text{ is not parabolic}\}$.
Since the set of parabolic elements in $\PSL \complexes$ is closed, $U_g$ is an open set.
Recall that $\pi$ denotes the projection from $\Bb$ to $\Rbb$.
Since the property that $[g]$ is parabolic is invariant under quasi-conformal deformations, we see that $\pi^{-1}(\pi(U_g))=U_g$.
This shows that $\pi(U_g)$ is an open set in $\Rbb$.
If $b$ did not have $g$ as a parabolic curve, then $b$ would be contained in $\pi(U_g)$.
Then $a$ would also be contained in $\pi(U_g)$ since $b$ is unilaterally adherent to $a$.
This would contradict our assumption that $g$ is a parabolic curve for $a$.
Thus we have shown that $b$ also has $g$ as a parabolic curve.

Next suppose that $\lambda$ is an ending lamination for $(G,\phi)$.
By putting some transverse measure on $\lambda$, we can regard $\lambda$ also as a measured lamination.
For any group $(H, \xi)$ in $\Bb$, the property that $\lambda$ is an ending lamination is equivalent to the condition that $\length_{\hyperbolic^3/H}(\xi(\lambda))=0$, where the length of a measured lamination means that of its realisation provided that the length of an unrealisable lamination is defined to be $0$.
Since the length of $\lambda$ is continuous in $AH(S)$ as was shown by Brock \cite{Br}, the set $U_\lambda=\{(H,\xi) \in \Bb \mid \lambda \text{ is not an ending lamination for }(H,\xi)\}$ is open in $\Bb$.
Since the ending lamination is preserved under quasi-conformal deformations, we also have $\pi^{-1} (\pi(U_\lambda))=U_\lambda$.
If $b$ did not have $\lambda$ as an ending lamination, then $b$ would lie in $\pi(U_\lambda)$, which would imply that $a$ also lies in $\pi(U_\lambda)$, contradicting our assumption.
Therefore $b$ must have $\lambda$ as an ending lamination.
This completes the proof of the \lq \lq only if" part.

Conversely suppose that $e(b)$ contains $e(a)$.
As before, we consider  marked Kleinian groups $(G, \phi)$ and $(\Gamma, \psi)$ representing $a$ and $b$ respectively.
Since every parabolic curve and ending lamination of $(G, \phi)$ is also that of $(\Gamma,\psi)$, the main theorem of \cite{OhL} and the ending lamination theorem by Brock-Canary-Minsky \cite{BCM}, we see that there is a sequence of quasi-conformal deformations of $(G,\phi)$ which converges algebraically to $(\Gamma, \psi)$.
This shows that every neighbourhood of $(\Gamma, \psi)$ must intersect $\pi^{-1}(a)$, which means that every neighbourhood of $b$ contains $a$, hence that $b$ is unilaterally adherent to $a$.
This completes the proof of the \lq \lq if" part.
\end{proof}

We shall next show that the adherence height is determined by the dimension of quasi-conformal deformations.
Let $a$ be a point in $\Rbb$.
Then $\pi^{-1}(a)$ is a quasi-conformal deformation space lying in $\Bb$.
By $\dim \pi^{-1}(a)$, we mean the real dimension of this quasi-conformal deformation space.

\begin{lemma}
\label{computing height}
For a point $a \in \Rbb$, we have $\ah(a)=\dim \pi^{-1}(a)/2$.
\end{lemma}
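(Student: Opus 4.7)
The plan is to bridge the combinatorial count of strict chains in $\UML_0(S)$ above $e(a)$ with the Ahlfors--Bers dimension count for the quasi-conformal deformation space $\pi^{-1}(a)$, using Lemma~\ref{adherence} as the translation device.

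By Lemma~\ref{adherence} together with the injectivity of $e$ (Definition~\ref{bijection}), an adherence tower $(a = a_0, a_1, \dots, a_n)$ corresponds precisely to a strict chain $e(a) \subsetneq e(a_1) \subsetneq \dots \subsetneq e(a_n)$ of elements of $\UML_0(S)$: the containments follow from the adherence condition, and strictness from $e$ being bijective. Hence $\ah(a)$ equals the supremum of lengths of such strict chains above $e(a)$. On the dimension side, if $(G,\phi)$ represents $a$, then the parabolic curves together with the minimal supporting surfaces of the ending laminations decompose $S$ into degenerate pieces and geometrically finite pieces $\Sigma_1, \dots, \Sigma_k$; the quasi-conformal deformation space of $(G, \phi)$ inside $\Bb$ is parametrized by $\prod_{i=1}^k \teich(\Sigma_i)$, so $\dim \pi^{-1}(a) = \sum_{i=1}^{k} \bigl(6g(\Sigma_i) - 6 + 2 p(\Sigma_i)\bigr)$.

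For the lower bound, I would build an explicit tower: fix a pants decomposition of each $\Sigma_i$ consisting of $3g(\Sigma_i) - 3 + p(\Sigma_i)$ disjoint simple closed curves, and adjoin them one at a time (in any linear order across all $\Sigma_i$) to $e(a)$. Each intermediate lamination lies in $\UML_0(S)$, since only simple closed curves are added and the condition on minimal supporting surfaces inherited from $e(a)$ is preserved. The realization result of Ohshika~\cite{OhI} then supplies a b-group whose end invariant is each intermediate lamination, producing an adherence tower of length $\sum_i (3g(\Sigma_i) - 3 + p(\Sigma_i)) = \dim \pi^{-1}(a)/2$.

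The matching upper bound proceeds by induction on the complexity $3g(\Sigma) - 3 + p(\Sigma)$ of each geometrically finite piece. Any element of $\UML_0(S)$ strictly above $e(a)$ decomposes, relative to $e(a)$, into components lying inside the $\Sigma_i$; each such component in $\Sigma_i$ is either a simple closed curve (splitting $\Sigma_i$ into sub-pieces to which the induction applies, using the identity $3g(\Sigma) - 3 + p(\Sigma) = 1 + (3g(\Sigma_1) - 3 + p(\Sigma_1)) + (3g(\Sigma_2) - 3 + p(\Sigma_2))$ in the separating case, and the analogous identity in the non-separating case) or a minimal lamination filling a sub-surface, which contributes only one tower step and never beats the pants-decomposition count on that sub-piece. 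Summing yields $\ah(a) \le \sum_i (3g(\Sigma_i) - 3 + p(\Sigma_i)) = \dim \pi^{-1}(a)/2$. The main obstacle will be formalizing the induction cleanly: one must verify that every added component localizes inside some $\Sigma_i$ (no component of the chain can cross a parabolic curve of $e(a)$), and handle the base cases of four-holed spheres and once-punctured tori (complexity $1$), where adding a curve and filling by a minimal lamination both realize the single-step bound.
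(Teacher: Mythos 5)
Your proof is correct and follows essentially the same route as the paper: both arguments translate adherence towers into strictly increasing chains of end invariants via Lemma \ref{adherence} and the injectivity of $e$, identify $\dim\pi^{-1}(a)$ with the sum of the Teichm\"{u}ller dimensions of the non-invariant pieces, and realize a maximal tower by adjoining pants curves of those pieces one at a time. The only organizational difference is in the upper bound, where the paper avoids your induction on subsurface complexity by a single per-step observation --- by the ending lamination theorem consecutive groups in a tower cannot be quasi-conformally conjugate, so each step introduces a new parabolic curve or ending lamination and drops $\dim\pi^{-1}$ by at least $2$ --- which disposes of exactly the bookkeeping you flag as the main obstacle.
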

\begin{proof}
Consider an adherence tower $(a=a_0, \dots , a_n)$.
For each $j=0, \dots , n$, take a marked Kleinian group $\alpha_j=(H_j, \xi_j)$ in $\Bb$ representing $a_j$.
By Lemma \ref{adherence}, every parabolic curve and ending lamination of $\alpha_j$ is also that of $\alpha_{j+1}$.
By the  ending lamination theorem by Brock-Canary-Minsky \cite{BCM}, if there is neither a new parabolic curve nor a new ending lamination of $\alpha_{j+1}$, then $\alpha_j$ and $\alpha_{j+1}$ are quasi-conformally conjugate, which contradicts the assumption that $a_j$ and $a_{j+1}$ are distinct points.
Therefore, the dimension of $\pi^{-1}(a_{j+1})$ is at most $\dim \pi^{-1}(a_j)-2$, where the equality is attained when all the ending laminations of $\alpha_{j+1}$ are those of $\alpha_j$ and there is only one new parabolic curve.
This shows that $\ah(a) \leq \dim \pi^{-1}(a)/2$.

We shall next show the opposite inequality.
We inductively define a sequence $a_j$, which is represented by $\alpha_j=(H_j, \xi_j) \in \Bb$, as follows.
The Riemann surface $\Omega_{H_j}/H_j$ consists of one component homeomorphic to $S$ corresponding to the invariant component of $\Omega_{H_j}$ and the other components each of which corresponds to a subsurface of $S$.
Let $\Sigma_j$ denote the union of the latter components.
If either $\Sigma_j$ is empty or every component of $\Sigma_j$ is a thrice-punctured sphere, then $\alpha_j$ is quasi-conformally rigid within $\Bb$, and we let $a_j$ be the last one in the sequence.
Otherwise, we can take an essential simple closed curve $c_j$ in $\Sigma_j$.
We can pinch $\Sigma_j$ along $c_j$ by quasi-conformal deformations within $\Bb$.
We let $\alpha_{j+1}=(H_{j+1}, \xi_{j+1})$ be its limit in $\Bb$.
Then $a_{j+1}=\pi(\alpha_{j+1})$ has only one parabolic curve that is not a parabolic curve of $a_j$, and $\dim \pi^{-1}(a_{j+1}) =\dim \pi^{-1}(a_j)-2$.
By  this construction, we get a sequence with $n=\dim \pi^{-1}(a)/2$, which implies that $\ah(a) \geq \dim \pi^{-1}(a)/2$.
\end{proof}

We now consider the set of multiple curves on $S$, which we denote by $\mc(S)$.
This set is identified with the set of barycentres of simplices in $\mathcal C(S)$. 
Therefore, we can regard $\mc(S)$ as a subset of $\mathcal C(S)$.
Naturally $\mathcal C^0(S)$ is regarded as a subset of $\mc(S)$.

Let $\iota : \mc(S) \rightarrow \regb \subset \Rbb$ be an embedding obtained by setting the image of a multiple curve $c$ to be the class in $\Rbb$ represented by a regular b-group having $c$ exactly as the parabolic curves.
This is a bijection to $\regb$.
Using the above lemmata, we can show the following.
\begin{proposition}
\label{curve automorphism}
Let $f: \Rbb \rightarrow \Rbb$ be a homeomorphism.
Then, there is a simplicial automorphism $f': \mathcal C(S) \rightarrow \mathcal C(S)$ such that $\iota(f'(c))=f(\iota(c))$ for every $c \in \mathcal \mc(S)$.
\end{proposition}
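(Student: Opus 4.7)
The plan is to use the combinatorial invariants already introduced---unilateral adherence and adherence height---to recover the curve complex intrinsically from the topology of $\Rbb$, and then transport $f$ along $\iota$.

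First I would identify $\iota(\mathcal C^0(S))$ as the set of elements of $\Rbb$ of maximal adherence height. For $c \in \mathcal C^0(S)$ the quasi-conformal deformation space $\pi^{-1}(\iota(c))$ is parametrised by $\teich(S \setminus c)$, which has real dimension $2(3g-4+p)$, so Lemma \ref{computing height} gives $\ah(\iota(c))=3g-4+p$. On the other hand, any $a \in \Rbb$ having either an additional parabolic curve or at least one ending lamination satisfies $\dim \pi^{-1}(a) < 2(3g-4+p)$, since each such extra feature strictly reduces the Teichm\"{u}ller dimension of the upper conformal structure. Hence $\iota(\mathcal C^0(S)) = \{a \in \Rbb : \ah(a) = 3g-4+p\}$. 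By Lemma \ref{preserving ah}, $f$ stabilises this set and, via $\iota$, induces a bijection $f': \mathcal C^0(S) \to \mathcal C^0(S)$.

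Next I would characterise adjacency in $\mathcal C(S)$ topologically. Two vertices $c_1, c_2 \in \mathcal C^0(S)$ span an edge if and only if some $a \in \Rbb$ is unilaterally adherent to both $\iota(c_1)$ and $\iota(c_2)$: the forward direction is witnessed by $a = \iota(c_1 \cup c_2)$, and the converse follows from Lemma \ref{adherence}, which forces $e(a)$ to contain $c_1$ and $c_2$ as disjoint components of a lamination. Since $f$ preserves the purely topological relation of unilateral adherence, $f'$ preserves adjacency. As $\mathcal C(S)$ is a flag complex when $\xi(S)>4$, the map $f'$ extends uniquely to a simplicial automorphism of $\mathcal C(S)$.

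Finally I would verify $\iota(f'(c)) = f(\iota(c))$ for a general multicurve $c = c_1 \cup \cdots \cup c_k$. Set $b = f(\iota(c))$. Preservation of unilateral adherence gives $b$ unilaterally adherent to each $\iota(f'(c_i))$, so by Lemma \ref{adherence}, $e(b) \supseteq f'(c_1) \cup \cdots \cup f'(c_k)$. Combining Lemmas \ref{preserving ah} and \ref{computing height} yields $\ah(b) = \ah(\iota(c)) = 3g-3+p-k$. A strict inclusion $e(b) \supsetneq f'(c)$ would force $b$ to carry an extra parabolic curve or an ending lamination, strictly lowering $\dim \pi^{-1}(b)$ below $2(3g-3+p-k)$, contradicting the computation of $\ah(b)$. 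Hence $e(b) = f'(c)$, and since $e$ is bijective, $b = \iota(f'(c))$. The main technical point is the dimension count pinning down $\iota(\mathcal C^0(S))$ (and each $\iota(c)$) as an extremal element of adherence height within its $e$-fibre; once this is in hand, the remainder of the argument reduces to the flag property of $\mathcal C(S)$ together with the bijectivity of $e$.
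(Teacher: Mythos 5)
Your proof is correct, and it rests on the same three pillars as the paper's (Lemmas \ref{preserving ah}, \ref{adherence} and \ref{computing height}), but it organises the combinatorics differently. The paper does not stop at vertices: it characterises the whole of $\iota(\mc(S))$, and indeed each $k$-simplex, directly in adherence terms, by showing that $c_0,\dots,c_k$ span a simplex exactly when for each $i$ there are adherence towers of maximal length $\dim\teich(S)/2-1$ passing through $\iota(\{c_i\})$ at position $0$ and sharing a common $k$-th entry; the point of the tower condition is that for $k\geq 2$ the adherence height alone does not separate $\iota$ of a $k$-component multicurve from, say, a point whose end invariant contains an ending lamination filling a small subsurface, so the paper needs the full tower to force every entry to be a regular b-group. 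You sidestep this by only pinning down the vertices (where the extremal dimension count $\ah(a)=\dim\teich(S)/2-1$ genuinely does force $e(a)$ to be a single curve) and the edges (via the existence of a common unilateral adherence point, with the converse direction supplied by Lemma \ref{adherence} and the fact that two components of one lamination are disjoint), and then invoking the flag property of $\mathcal C(S)$ — which is immediate from the paper's definition of the simplices as pairwise-disjoint collections. Your final verification of $\iota(f'(c))=f(\iota(c))$ on a general multicurve, via $e(f(\iota(c)))\supseteq f'(c)$ plus the equality of adherence heights and the bijectivity of $e$, likewise replaces the paper's appeal to preservation of regular b-groups; both arguments are sound. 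The trade-off is that your route is shorter and more transparent, while the paper's tower characterisation yields as a by-product that $f$ preserves the set of regular b-groups and the set $\regb$, which is reused later in the proof of Theorem \ref{rigidity}; with your approach that fact would still follow, but only after the proposition is fully established.
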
 
\begin{proof}
We shall first show that $f$ preserves the image of $\iota$ and its subset $\iota(\mathcal C^0(S))$.
Let $\gamma$ be a multiple curve on $S$ consisting of $k$ components.
Then $\ah(\iota(\gamma))=\dim \teich(S)/2 -k$ by Lemma \ref{computing height}.
On the other hand, there is an ascending sequence of multiple curves $\gamma_1, \dots , \gamma_n$ with $n=\dim \teich(S)/2$ such that $\gamma_j \subset \gamma_{j+1}$, each $\gamma_j$ consists of $j$ components, and $\gamma_k=\gamma$.
As was shown in the proof of Lemma \ref{computing height}, this sequence induces an adherence tower, $(\iota(\gamma_1), \dots , \iota(\gamma_n))$, of height $n-1$ with $\iota(\gamma_k)=\iota(\gamma)$.

Conversely suppose that $\ah(a)=\dim \teich(S)/2-k$ for some point $a \in \Rbb$, and that there is an adherence tower $(a_1, \dots , a_n)$ such that $a_k=a$.
By Lemma \ref{computing height} and the Ahlfors-Bers theory, we see that all the $a_j$ are represented by regular b-groups and $a_{j+1}$ has one more parabolic curve than $a_j$ has.
Therefore $a$ is also a regular b-group and is contained in the image of $\iota$.
Since $f$ preserves the adherence height by Lemma \ref{preserving ah}, we see that $f$ preserves the image of $\iota$ and $\iota(\mathcal C^0(S))$.
Moreover,  there is a bijection $\bar f: \mathcal C^0(S) \rightarrow \mathcal C^0(S)$ such that $\iota(\bar f(c))=f(\iota(c))$ for every $c \in \mathcal C^0(S)$.

Next we shall show that $\bar f$ can be extended to a simplicial automorphism of $\mathcal C(S)$.
Suppose that $c_0, \dots , c_k$ are vertices of $\mathcal C(S)$ spanning a $k$-simplex.
For each $i$, we consider an ascending sequence of subsets of $\{c_0, \dots , c_k\}$ starting from $\{c_i\}$ and ending with $\{c_0, \dots, c_k\}$, which we denote by $s(i)_0, \dots, s(i)_{k}$.
Let $a(i)_j$ be a point in $\Rbb$ represented by a regular b-groups whose parabolic curves are exactly $s(i)_j$, in other words, we set $a(i)_j=\iota(s(i)_j)$ regarding $s(i)_j$ as a multiple curve.
Then, we get an adherence tower $(a(i)_0, \dots , a(i)_k)$, and $\ah(a(i)_k)=\dim \teich(S)/2-k-1$.
Moreover, the point $a(i)_k$ does not depend on $i$, since it is always represented by a regular b-group whose parabolic curves are exactly $\{c_0, \dots , c_k\}$.

We shall see that this condition, in turn, characterises the existence of a $k$-simplex spanned by $c_0, \dots , c_k$.
Suppose that for each $i=0, \dots , k$, there is an adherence tower $T(i)=(a(i)_0, \dots , a(i)_k)$ with $a(i)_0=\iota(\{c_i\})$ and $\ah(a(i)_k)=\dim \teich(S)/2-k-1$, and that $a(j)_k$ does not depend on $i$.
Then by appending to $T(i)$ an adherence tower starting from $a(i)_k$ realising its adherence height, we get an adherence tower of length $\dim\teich(S)/2-1$.
Such a tower consists of points represented by regular b-groups, and the number of parabolic curves of the $j$-th point is equal to $j+1$.
Therefore, every point of $T(i)$ is represented by a b-group, and $a(i)_k$ is represented by b-groups whose parabolic curves are exactly $c_0, \dots , c_k$.
This means that $c_0, \dots , c_k$ span a $k$-simplex.
Thus, we have characterised the condition that $c_0, \dots , c_k$ spans a $k$-simplex, using only terms of adherence.
Since $f$ preserves the existence of such adherence towers, we see that $\bar f(c_0), \dots , \bar f(c_k)$ span  a $k$-simplex if and only if $c_0, \dots , c_k$ do.
This means that $\bar f$ extends to a simplicial automorphism, which we let be $f'$.

It remains to show that the equality $\iota(f'(c))= f(\iota(c))$ holds for every simplex $c$ of $\mathcal C(S)$.
Suppose that $c$ is a simplex spanned by $c_0, \dots, c_k$.
Then $f'(c)$ is the simplex spanned by $f'(c_0), \dots , f'(c_k)$.
Since $\iota(c)$ is a regular b-group having $c_0, \dots , c_k$ as parabolic curves, it is unilaterally adherent to all of $\iota(c_0), \dots , \iota(c_k)$ by Lemma \ref{adherence}.
Therefore, $f(\iota(c))$ is unilaterally adherent to all of $f(\iota(c_0))=\iota(f'(c_0)), \dots , f(\iota(c_k))=\iota(f'(c_k))$.
This means that $\iota(c)$ has $f'(c_0), \dots, f'(c_k)$ as parabolic curves again by Lemma \ref{adherence}.
Since $\ah(f(\iota(c))=\ah(\iota(c))=\dim \teich(S)/2-k-1$, these latter curves are the only parabolic curves of $f(\iota(c))$.
Since the property of being a regular b-group is preserved by $f$ as was shown before, this means that $f(\iota(c))=\iota(f'(c))$ by our definition of $\iota$.
\end{proof}

As a corollary of this proposition, we get the following.

\begin{corollary}
\label{on regular b-groups}
For any homeomorphism $f: \Rbb \rightarrow \Rbb$, there is a diffeomorphism $g: S \rightarrow S$ such that for any multiple curve $c$ on $S$, we have $\iota(g(c))=f(\iota(c))$ under the assumption that $\xi(S)>4$.
\end{corollary}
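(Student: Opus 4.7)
The plan is that this corollary follows almost immediately from Proposition \ref{curve automorphism} combined with the Ivanov--Korkmaz--Luo theorem, which was already cited in the introduction of the paper. First, I would apply Proposition \ref{curve automorphism} to the given homeomorphism $f : \Rbb \to \Rbb$ to obtain a simplicial automorphism $f' : \mathcal{C}(S) \to \mathcal{C}(S)$ satisfying $\iota(f'(c)) = f(\iota(c))$ for every multiple curve $c \in \mc(S)$.

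Next, under the hypothesis $\xi(S)>4$, the theorem of Ivanov, Korkmaz and Luo (\cite{Iv}, \cite{Ko}, \cite{Lu}) asserts that every simplicial automorphism of the curve complex $\mathcal{C}(S)$ is induced by a diffeomorphism of $S$. Applying this to $f'$ yields a diffeomorphism $g : S \to S$ such that the induced map on $\mathcal{C}^0(S)$ equals the restriction of $f'$ to $\mathcal{C}^0(S)$. Since simplicial maps are determined by their action on vertices, $g_*$ coincides with $f'$ on all simplices and hence on all barycentres, i.e.\ on $\mc(S)$.

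Combining these two facts, for every multiple curve $c$ on $S$ we get
\[
\iota(g(c)) = \iota(f'(c)) = f(\iota(c)),
\]
which is exactly the required equality. There is no real obstacle here: the substantive work has already been done in Proposition \ref{curve automorphism} (which translates the topological/dynamical information about $f$ on $\Rbb$ into combinatorial information on $\mathcal{C}(S)$), and the Ivanov--Korkmaz--Luo theorem supplies the surface diffeomorphism realising this combinatorial automorphism. The only point deserving a line of explanation is why the assumption $\xi(S)>4$ is needed, namely to invoke that theorem (in the borderline case $\xi(S)=4$ the curve complex is only a graph and the rigidity statement requires different hypotheses).
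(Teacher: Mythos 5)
Your proposal is correct and follows essentially the same route as the paper: apply Proposition \ref{curve automorphism} to obtain the simplicial automorphism $f'$ and then invoke the Ivanov--Korkmaz--Luo theorem to realise $f'$ by a surface homeomorphism, noting that agreement on vertices propagates to all multiple curves. No gaps.
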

\begin{proof}
By the preceding proposition, we see that there is a simplicial automorphism $f': \mathcal C(S) \rightarrow \mathcal C(S)$ such that $\iota f'(c)=f(\iota(c))$ for every essential simple closed curve $c$.
By the result of Ivanov \cite{Iv}, Korkmaz \cite{Ko} and Luo \cite{Lu}, there is a homeomorphism $g: S \rightarrow S$ inducing $f'$ on $\mathcal C(S)$.
Thus we have completed the proof.
\end{proof}

We also obtain another corollary which is similar to Corollary 3.6 of Papadopoulos \cite{Pa}.

\begin{corollary}
\label{distinct dimension}
For two surfaces $S_1$ and $S_2$ with $\dim \teich(S_1) \neq \dim \teich(S_2)$, the reduced Bers boundaries of $\teich(S_1)$ and $\teich(S_2)$ are not homeomorphic.
\end{corollary}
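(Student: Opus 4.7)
The plan is to exhibit a purely topological invariant of the reduced Bers boundary that detects $\dim \teich(S)$. The natural candidate is the maximal adherence height
\[
h(S) := \sup_{a \in \Rbb \teich(S)} \ah(a),
\]
which by Lemma \ref{preserving ah} is preserved by any homeomorphism between two reduced Bers boundaries. I aim to show that $h(S) = \dim \teich(S)/2 - 1$; since $\dim \teich(S_1) \neq \dim \teich(S_2)$ by hypothesis, this would immediately yield the corollary.

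To compute $h(S)$ I would combine Lemma \ref{computing height}, giving $\ah(a) = \dim \pi^{-1}(a)/2$, with a direct analysis of the quasi-conformal deformation space $\pi^{-1}(a) \subset \Bb$. Represent $a$ by a b-group $(G,\phi) \in \Bb$ and let its parabolic curves form a multi-curve with $k$ components. The deformation space of $(G,\phi)$ within the Bers slice is parameterised by the Teichm\"uller spaces of those components of $\Omega_G/G$ distinct from the fixed invariant one; each is a subsurface of $S$ bounded by parabolic curves, and subsurfaces cut off by simply degenerate ends contribute nothing. The total Teichm\"uller dimension is therefore at most $\dim \teich(S) - 2k$. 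If $k \geq 1$ this yields $\dim \pi^{-1}(a) \leq \dim \teich(S) - 2$; if $k = 0$, the b-group condition (unique invariant component, hence not quasi-Fuchsian) forces $(G,\phi)$ to be totally degenerate, so $\Omega_G/G$ reduces to its invariant component and $\dim \pi^{-1}(a) = 0$. Hence $h(S) \leq \dim \teich(S)/2 - 1$.

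For the matching lower bound I would reuse a construction already appearing in the proof of Proposition \ref{curve automorphism}: for any essential simple closed curve $\gamma$ on $S$, the point $\iota(\gamma) \in \regb$ is a regular b-group with $\gamma$ as its sole parabolic curve, so $\dim \pi^{-1}(\iota(\gamma)) = \dim \teich(S) - 2$ and $\ah(\iota(\gamma)) = \dim \teich(S)/2 - 1$ by Lemma \ref{computing height}. Together with the upper bound this gives $h(S) = \dim \teich(S)/2 - 1$ exactly, from which the corollary drops out.

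The only delicate point is justifying the upper bound $\dim \pi^{-1}(a) \leq \dim \teich(S) - 2$; it rests on the structural dichotomy just invoked, namely that a b-group on the Bers boundary is either totally degenerate (no non-invariant components of $\Omega_G$, so trivial deformation space inside $\Bb$) or carries at least one parabolic curve (each pinching dropping two Teichm\"uller dimensions). Everything else is a direct assembly of the lemmata already established in this section.
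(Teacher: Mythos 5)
Your proposal is correct and follows essentially the same route as the paper: both arguments rest on the topological invariance of the adherence height together with Lemma \ref{computing height}, the realisation of the maximal value $\dim\teich(S)/2-1$ by $\iota(\gamma)$ for a simple closed curve $\gamma$, and the upper bound $\dim\pi^{-1}(a)\leq\dim\teich(S)-2$ for every boundary point. The only difference is presentational (you package the argument as computing $\sup_a \ah(a)$ rather than as a direct contradiction), and you spell out the dichotomy justifying the upper bound, which the paper merely asserts.
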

\begin{proof}
Suppose, seeking a contradiction, that there is a homeomorphism $h$ from $\Rbb \teich(S_1)$ to $\Rbb \teich(S_2)$, whereas $\dim \teich(S_1) \neq \dim \teich(S_2)$.
By interchanging $S_1$ and $S_2$ if necessary, we can assume that $\dim \teich(S_1) > \dim \teich(S_2)$.
Now, take an essential simple closed curve $c$ on $S_1$.
Then $\ah(\iota(c))=\dim \teich(S_1)/2-1$ by Lemma \ref{computing height}.
Since any homeomorphism preserves the $\ah$, we see that $\ah( h(\iota(c))=\dim \teich(S_1)/2-1$.
On the other hand, we see that $\dim \pi^{-1}(a) \leq \dim \teich(S_2)-2$ for any point $a \in \Rbb \teich(S_2)$.
By Lemma \ref{computing height} again, we see that for any point $a \in \Rbb \teich(S_2)$, its adherence height is at most $\dim \teich(S_2)/2-1$, which is less than $\dim \teich(S_1)/2-1$.
This is a contradiction.
\end{proof}

To prove the first half of Theorem \ref{rigidity}, we need to show that two auto-diffeomor-\linebreak phisms of $\Rbb$ inducing the same map on $\iota(\mc(S))=\regb$ coincide.
%
%
For that, we shall first construct open sets in $\Rbb$ which are useful to understand the topology of $\Rbb$.

We fix a complete hyperbolic metric on $S$ in the first place.
Let $b$ be a point in $\Rbb$, and set $\Lambda \in \UML(S)$ to be $e(b)$ for the bijection $e$ in Definition \ref{bijection}.
Take a sufficiently small $\epsilon >0$ so that the $\epsilon$-regular neighbourhoods  of the components of $\Lambda$ are pairwise disjoint.
Let $B_\epsilon(\Lambda)$ denote the union of these regular neighbourhoods.
Let $U_{\epsilon, K}(\Lambda)$ be a subset of $\UML(S)$ consisting of unmeasured laminations each of whose components is either contained in $B_\epsilon(\Lambda)$ or intersects $B_\epsilon(\Lambda)$ at least at a geodesic arc of length more than $K$.
We define $V_{\epsilon, K}(\Lambda)$ to be a subset consisting of points $a\in \Rbb$ with $e(a) \in U_{\epsilon, K}(\Lambda)$.

 Then we can see the following.

\begin{lemma}
\label{open sets}
For every small $\epsilon >0$ and large $K$, the set  $V_{\epsilon, K}(\Lambda)$ is  open  in $\Rbb$.
\end{lemma}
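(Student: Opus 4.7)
The plan is to check openness by lifting: since $\Rbb$ carries the quotient topology from $\Bb$ via $\pi$, it suffices to prove that $\pi^{-1}(V_{\epsilon, K}(\Lambda))$ is open in $\Bb$. As $\Bb$ is metrizable, I will argue sequentially. Suppose $(G_i, \phi_i) \to (G_\infty, \phi_\infty)$ in $\Bb$ with $\pi(G_\infty, \phi_\infty) \in V_{\epsilon, K}(\Lambda)$, so that $e(\pi(G_\infty, \phi_\infty)) = \Lambda$ (the fixed unmeasured lamination used to define the neighbourhood). Assuming for contradiction that $\pi(G_i, \phi_i) \notin V_{\epsilon, K}(\Lambda)$ for infinitely many $i$, pass to a subsequence so that each end invariant $\lambda_i = e(\pi(G_i, \phi_i))$ has a distinguished component $c_i$ that violates both $c_i \subset B_\epsilon(\Lambda)$ and the condition that $c_i \cap B_\epsilon(\Lambda)$ contains a geodesic arc of length exceeding $K$.

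After a further subsequence, the components $c_i$, each a parabolic curve or an ending lamination of $(G_i, \phi_i)$, converge in the Hausdorff topology to a geodesic lamination $c_\infty$ on $S$. By Proposition \ref{end invariants}, every minimal component of $c_\infty$ that is not a simple closed curve is an ending lamination of $(G_\infty, \phi_\infty)$, and every simple closed curve in $c_\infty$ is a parabolic curve; in either case, every minimal component of $c_\infty$ is a component of $\Lambda$, and thus is contained in $\Lambda \subset B_\epsilon(\Lambda)$.

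Exploiting the failure of the first condition, pick $p_i \in c_i \setminus B_\epsilon(\Lambda)$, and after a further subsequence assume $p_i \to p_\infty$. Hausdorff convergence places $p_\infty$ in $c_\infty$, while closedness of $S \setminus B_\epsilon(\Lambda)$ forces $p_\infty \notin B_\epsilon(\Lambda)$. Since no minimal component of $c_\infty$ exits $B_\epsilon(\Lambda)$, the point $p_\infty$ must lie on an isolated leaf $\ell$ of $c_\infty$. Both ends of $\ell$ spiral onto minimal components of $c_\infty$, each a subset of $\Lambda$. Hence, near an end of $\ell$, the leaf stays arbitrarily close to such a minimal component $\mu \subset \Lambda$; choosing $\epsilon' \in (0, \epsilon)$ and going far enough along that end yields a geodesic sub-arc $\alpha \subset \ell$ of length greater than $K$ lying inside $B_{\epsilon'}(\mu) \subset B_\epsilon(\Lambda)$.

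The final step transfers $\alpha$ back to the $c_i$. Hausdorff convergence of geodesic laminations is compatible with convergence of geodesic arcs on leaves, since a leaf is a geodesic determined by an initial point and tangent direction; so for each sufficiently large $i$ there is a geodesic sub-arc $\alpha_i$ of a leaf of $c_i$ close to $\alpha$ both in position and in length. Because $\overline{B_{\epsilon'}(\mu)} \subset B_\epsilon(\Lambda)$, we can ensure $\alpha_i \subset B_\epsilon(\Lambda)$ and $\mathrm{length}(\alpha_i) > K$ for all large $i$, contradicting the assumed failure of the second condition. I expect the main subtlety to be this final transfer: the approximating arcs $\alpha_i$ must simultaneously be long and strictly inside the \emph{open} neighbourhood $B_\epsilon(\Lambda)$, which is precisely why it is important to locate $\alpha$ inside the slightly smaller neighbourhood $B_{\epsilon'}(\mu)$ rather than merely inside $B_\epsilon(\Lambda)$.
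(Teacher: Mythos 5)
Your overall strategy is the paper's: lift to $\Bb$ via $\pi$, use metrisability to argue along sequences, and control the end invariants of the approximating groups through Proposition \ref{end invariants} applied to Hausdorff limits of their components. The geometric core of your argument --- locating the offending point $p_\infty$ on an isolated leaf, using that both ends of an isolated leaf spiral onto minimal components lying in $\Lambda$, and transferring a long sub-arc inside a strictly smaller neighbourhood $B_{\epsilon'}(\mu)$ back to the $c_i$ --- is sound and in fact supplies details that the paper compresses into a single sentence.

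However, there is a genuine gap at the very first step. You write that $\pi(G_\infty,\phi_\infty)\in V_{\epsilon,K}(\Lambda)$ \emph{so that} $e(\pi(G_\infty,\phi_\infty))=\Lambda$. This is not what membership in $V_{\epsilon,K}(\Lambda)$ means: by definition $a\in V_{\epsilon,K}(\Lambda)$ iff $e(a)\in U_{\epsilon,K}(\Lambda)$, and $U_{\epsilon,K}(\Lambda)$ contains many laminations besides $\Lambda$ --- namely all those each of whose components is either contained in $B_\epsilon(\Lambda)$ or meets $B_\epsilon(\Lambda)$ in a geodesic arc of length $>K$. Consequently your argument only proves that the single point $b$ with $e(b)=\Lambda$ is an interior point of $V_{\epsilon,K}(\Lambda)$, not that the set is open. (That weaker statement does happen to be all that is used in the proof of Lemma \ref{separability}, but it is not the assertion of the lemma.) To close the gap you must run the same argument at an arbitrary $a\in V_{\epsilon,K}(\Lambda)$ with $e(a)=\Lambda'$. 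Then the minimal components of $c_\infty$ are components of $\Lambda'$, and these need not be contained in $B_\epsilon(\Lambda)$; a component may only meet $B_\epsilon(\Lambda)$ in a long arc. So $p_\infty$ can now lie on a \emph{minimal} component $\mu'$ of $c_\infty$, and the isolated-leaf dichotomy no longer applies. The repair is not difficult --- if $\mu'\cap B_\epsilon(\Lambda)$ contains a geodesic arc of length $K+2\delta$, a middle sub-arc of length $K+\delta$ is compactly contained in the open set $B_\epsilon(\Lambda)$, and Hausdorff convergence transfers it to arcs of length $>K$ on leaves of $c_i$ inside $B_\epsilon(\Lambda)$, again contradicting the failure of the second condition --- but this extra case must be treated, and as written your proof establishes strictly less than the lemma claims.
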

\begin{proof}
By the definition of the topology of $\Rbb$, we have only to show that the preimage $\pi^{-1}(V_{\epsilon, K}(\Lambda))$ is open in $\Bb$.
For a point $\tilde a \in \pi^{-1}(a)$, we define $\tilde e(\tilde a)$ to be $e(a)$.
Suppose that a marked Kleinian group $(G,\phi) \in \Bb$ is contained in $\pi^{-1}(V_{\epsilon, K}(\Lambda))$.
Then  $\tilde e(G,\phi)$ is contained in $U_{\epsilon, K}(\Lambda)$.
By Proposition \ref{end invariants}, if $\{(G_i, \phi_i) \in \Bb\}$ converges to $(G,\phi)$, then the minimal components of the Hausdorff limit of $\{\tilde e(G_i, \phi_i)\}$ are contained in  $\tilde e(G,\phi)$.
By our definition of $U_{\epsilon, K}(\Lambda)$, this implies that $\tilde e(G_i, \phi_i)$ is contained in $U_{\epsilon, K}(\Lambda)$ for large $i$.
Thus we have shown that $(G_i, \phi_i)$ is contained in $\pi^{-1}(V_{\epsilon, K}(\Lambda))$ for large $i$,
which implies that there is a neighbourhood of $(G,\phi)$  contained in $\pi^{-1}(V_{\epsilon, K}(\Lambda))$ since $\Bb$ is metrisable.
This shows that $\pi^{-1}(V_{\epsilon, K}(\Lambda))$ is open.
\end{proof}

Now, we shall show a lemma which is a key step for the proof of Theorem \ref{rigidity}.
We should note that since $\Rbb$ is not Hausdorff, for a convergent sequence in $\Rbb$, its limit may be more than one point.
\begin{lemma}
\label{separability}
Let $b$ be a point in $\Rbb$ with $\ah(b)=k$.
Then there is a sequence $\{a_i\}$ in $\regb$ which converges to $b$, such that for any point $d$ other than $b$ that is contained in the limit of $\{a_i\}$, we have $\ah(d) < \ah(b)$.
\end{lemma}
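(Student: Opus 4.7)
The plan is to approximate $b$ by regular b-groups whose parabolic multiple curves Hausdorff-approximate $\Lambda := e(b)$. Fix $(G,\phi) \in \Bb$ representing $b$, and write $\Lambda = C_b \cup \{\mu_1,\ldots,\mu_n\}$ with $C_b$ the parabolic curves and the $\mu_j$ the ending laminations, with minimal supporting surfaces $\Sigma_j$. For each $j$, choose essential simple closed curves $c_{j,i} \subset \Sigma_j$ with $c_{j,i} \to \mu_j$ in the Hausdorff topology on the fixed hyperbolic surface; set $P_i := C_b \cup \{c_{1,i},\ldots,c_{n,i}\}$ and $a_i := \iota(P_i) \in \regb$.

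First I would check that $a_i \to b$ in $\Rbb$ by producing representatives $(G_i,\phi_i) \in \pi^{-1}(a_i)$ converging algebraically to $(G,\phi)$ in $\Bb$. Concretely, starting from the quasi-conformal deformation space of $b$ and applying the pinching deformation inside $\Bb$ used in the proof of Lemma \ref{computing height} (together with the Brock-Canary-Minsky density theorem) yields regular b-groups with parabolic multiple curve $P_i$ whose algebraic limit is $(G,\phi)$, as $c_{j,i} \to \mu_j$ models the degeneration of the geometrically finite end into the simply degenerate one.

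Next I would rule out other limits. Suppose $a_i \to d$ in $\Rbb$ with $d \neq b$. Since $(G_i,\phi_i) \to (G,\phi)$ and every saturated open neighbourhood of $\pi^{-1}(d)$ contains $(G_i,\phi_i)$ eventually, $(G,\phi)$ lies in each such closure, so $b \in \overline{\{d\}}$, i.e.\ $b$ is unilaterally adherent to $d$; Lemma \ref{adherence} gives $e(d) \subseteq \Lambda$. Since $d \neq b$, some component $\eta \in \Lambda \setminus e(d)$ is missing. I would then use the open neighbourhood $V_{\epsilon,K}(e(d))$ of $d$ provided by Lemma \ref{open sets}: $a_i \to d$ forces $P_i \in U_{\epsilon,K}(e(d))$ eventually. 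If $\eta$ is a parabolic curve $c_l \in C_b$, it lies in $P_i$ at positive distance $\delta$ from $e(d)$, and choosing $\epsilon < \delta$ makes $c_l$ fail the condition. If $\eta$ is an ending lamination $\mu_j$, the corresponding $c_{j,i} \subset \Sigma_j$ fills $\Sigma_j$ as $i \to \infty$ and cannot be contained in $B_\epsilon(e(d))$, while its geodesic arcs inside $B_\epsilon(e(d)) \cap \Sigma_j \subseteq B_\epsilon(\partial\Sigma_j \cap C_d)$ are uniformly bounded in length by some $L(\epsilon)$; choosing $K > L(\epsilon)$ then makes $c_{j,i}$ fail. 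Either case contradicts $a_i \to d$.

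The main obstacle is the uniform arc-length bound in the ending-lamination case. It rests on the structural fact that ending laminations never spiral around boundary components of their minimal supporting surfaces --- otherwise a boundary curve would be contained in the lamination, forcing it, by minimality, to be itself that closed curve, contradicting $\mu_j$ being an ending lamination. Translating this into a bound $L(\epsilon)$ uniform in $i$ for the arcs of the approximants $c_{j,i}$ inside an $\epsilon$-tubular neighbourhood of $\partial\Sigma_j$ is the delicate technical point; together with the specific Hausdorff-approximation choice of $P_i$ it makes the contradiction go through. A secondary point is the actual construction of the algebraic approximants $(G_i,\phi_i)$, handled via the pinching inside $\Bb$ already appearing in Lemma \ref{computing height} and the density theorem of Brock-Canary-Minsky.
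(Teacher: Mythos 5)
Your construction of the approximating sequence is the paper's: take the parabolic curves of $b$ together with simple closed curves in each minimal supporting surface Hausdorff-converging to the corresponding ending lamination, and realise the resulting multicurves by regular b-groups. The convergence $a_i\to b$ is also handled in essentially the paper's spirit (the paper identifies the algebraic limit of the representatives via continuity of the length function and Abikoff's lemma rather than via pinching plus density, but this is a matter of packaging).

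The second half, however, contains a genuine gap, and it is not the technical arc-length estimate you flag but the step before it. From $a_i\to d$ and the algebraic convergence $(G_i,\phi_i)\to(G,\phi)$ you cannot conclude that $b$ is unilaterally adherent to $d$: a saturated open set containing $\pi^{-1}(d)$ contains the $(G_i,\phi_i)$ eventually, so the algebraic limit $(G,\phi)$ lies in its \emph{closure}, not in the set itself; in this non-Hausdorff quotient that only says $b$ and $d$ cannot be separated, not $b\in\overline{\{d\}}$. Consequently the inclusion $e(d)\subseteq e(b)$ does not follow --- and indeed it is false in general, since by Lemma \ref{adherence} every $d$ with $e(d)\supsetneq e(b)$ is unilaterally adherent to $b$ and is therefore automatically a limit of \emph{any} sequence converging to $b$; such limits genuinely occur and satisfy the reverse inclusion. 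Your argument then aims at a contradiction showing that no $d\neq b$ is a limit at all, which is both false and stronger than the lemma requires. The correct inclusion, and the one the paper proves, is $e(b)\subseteq e(d)$: one uses the open sets $V_{\epsilon,K}(e(d))$ in the opposite direction, namely that $a_i\in V_{\epsilon,K}(e(d))$ eventually forces each component of the multicurve $e(a_i)$ to accumulate (as $\epsilon\to 0$, $K\to\infty$) onto a minimal component of $e(d)$; since the Hausdorff limits of these components are exactly the components of $e(b)$, every component of $e(b)$ must lie in $e(d)$. Then Lemma \ref{computing height} gives $\ah(d)<\ah(b)$ unless $d=b$. You would need to reverse the direction of your adherence argument along these lines; the spiralling/arc-length discussion you identify as the main obstacle is not needed once the inclusion is set up the right way around.
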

\begin{proof}
If $b$ itself is contained in $\regb$, we take $a_i$ to be constantly $b$.
Then $\{a_i\}$ converges to $d$ other than $b$ if and only if $d$ is unilaterally adherent to $b$.
This implies that $\ah(d) < \ah(b)$ by the definition of the adherence height.

Now suppose that $b$ lies in $\Rbb \setminus \regb$.
Consider a marked Kleinian group $(G, \phi)$ representing $b$.
Let $\gamma_1, \dots , \gamma_p$ be the parabolic curves of $(G,\phi)$ and $\lambda_1, \dots , \lambda_q$ its ending laminations.
Let $T(\lambda_j)$ denote the minimal supporting surface of $\lambda_j$.
For each $j=1, \dots , q$, we take a sequence of simple closed curves $\{K^j_i\}$ in $T(\lambda_j)$ converging to $\lambda_j$ in the Hausdorff topology.
We note that this implies that there is a sequence of positive real numbers $r^j_i$ such that $\{r^j_i K^j_i\}$ converges to a measured lamination whose support is $\lambda_j$.
Let $C_i$ be the multiple curve consisting of $\gamma_1, \dots , \gamma_p$ and $K^1_i, \dots , K^q_i$.
We take a regular b-group $(G_i, \phi_i)$ whose parabolic curves are exactly $\gamma_1, \dots, \gamma_p , K^1_i, \dots , K^q_i$ (\ie $\pi \circ e(G_i, \phi_i)=C_i$) so that the conformal structures on $S \setminus (\cup_{j=1}^p \gamma_j \cup \cup_{j=1}^q T(\lambda_j))$ are independent of $i$.
We let $c_i$ be the point in $\Rbb$ represented by $(G_i, \phi_i)$.

Consider an algebraic limit $(G_\infty, \phi_\infty)$ of $\{(G_i, \phi_i)\}$ passing to a subsequence if necessary.
By the continuity of the length function proved by Brock \cite{Br},  $\gamma_1, \dots , \gamma_p$ are parabolic curves and  $\lambda_1, \dots , \lambda_q$ are ending laminations of $(G_\infty, \phi_\infty)$.
Furthermore, by Abikoff's Lemma 3 in \cite{Ab},  $\Omega_{G_\infty}/G_\infty$ has components corresponding to those of $S \setminus (\cup_{j=1}^p \gamma_j \cup \cup_{j=1}^q T(\lambda_j))$.
This implies that there are no parabolic curves other than $\gamma_1, \dots , \gamma_p$ and no ending laminations other than $\lambda_1, \dots , \lambda_q$.
Therefore, $(G_\infty, \phi_\infty)$ represents $b$, which means that $\{c_i\}$ converges to $b$.

Now suppose that $\{c_i\}$ also converges to $d$.
Then for any small $\epsilon >0$ and large $K$, if we take sufficiently large $i$, then  $c_i$ is contained in $V_{\epsilon, K}(e(d))$ since $V_{\epsilon, K}(e(d))$ is an open neighbourhood of $d$ by Lemma \ref{open sets}.
By our definition of $V_{\epsilon , K}(\Lambda)$, this means that $C_i$ is contained in $U_{\epsilon, K}(e(d))$ for sufficiently large $i$.
From our definition of $U_{\epsilon, K}(\Lambda)$, by letting $\epsilon \rightarrow 0$ and $K \rightarrow \infty$, we see that for every component of $C_i$, its Hausdorff limit contains a component of $e(d)$ as a minimal component.
By our definition of $C_i$ above, the Hausdorff limits of the components of $e(c_i)$ are $\gamma_1, \dots , \gamma_p$ and $\lambda_1, \dots , \lambda_q$ respectively, whose union is equal to $e(b)$.
What has been proved above says that each of $\gamma_1, \dots , \gamma_p$ and $\lambda_1, \dots , \lambda_q$ contains a minimal component of $e(d)$.
This is possible only when they are all contained in $e(d)$.
Thus we have shown that $e(b)$ is contained in $e(d)$.
If $e(b)=e(d)$, then $b=d$.
Otherwise, we have $\ah(d) < \ah(b)$.
%
%
\end{proof}
Having proved Lemma \ref{separability}, we can now complete the proof of the first half of Theorem \ref{rigidity}.
Let $f_1$ and $f_2$ be two auto-homeomorphisms of $\Rbb$ inducing the same map on $\regb$.
Let $b$ be a point in $\Rbb$, and take a sequence $\{a_i\}$ as in Lemma \ref{separability}.
Then $f_1(b)$ is contained in the limit of $\{f_1(a_i)\}$, and for any other limit of $d'$ of $\{f_1(a_i)\}$ we have $\ah(d') < \ah(f_1(b))$ since $f_1$ preserves the adherence height.
Since $f_1(a_i)=f_2(a_i)$ by assumption, $f_2(b)$ is also contained in the limit of $\{f_2(a_i)\}$ and has the same property as above replacing $f_1$ with $f_2$.
This implies that $f_1(b)=f_2(b)$.

Thus we have shown that two auto-homeomorphisms of $\Rbb$ inducing the same map on $\regb=\iota(\mc(S))$ coincide.
By Corollary  \ref{on regular b-groups}, this implies the first half of Theorem \ref{rigidity}.

Under the assumption that $S$ is not a closed surface of genus $2$, Ivanov \cite{Iv}, Korkmaz \cite{Ko} and Luo \cite{Lu} showed that two diffeomorphisms inducing the same automorphisms on $\mathcal C(S)$ coincide.
This implies the second half of Theorem \ref{rigidity} immediately.

\section{Detecting limit points on $\Rbb$}
\label{sec:detection}
In this section, we address a problem to determine the limit point in $\Rbb$ for a given sequence in $\teich(S)$.
As examples of Kerckhoff-Thurston \cite{KT} and Brock \cite{BrI} show, the Thurston compactification does not have enough information to determine the limits in $\Rbb$.
Still, by paying attention to the degeneration of surfaces in the complement of Thurston limit, we can detect the limit point in $\Rbb$ up to some ambiguity on parabolic curves.
This construction can be regarded as a generalisation of the Morgan-Shalen compactification (\cite{MS}) which is reinterpreted by Bestvina \cite{Bes}, Paulin \cite{Pau} and Chiswell \cite{Chis} from various viewpoints.


\begin{definition}
\label{multi-layered limit}
Let $\{m_i\}$ be a sequence in $\teich(S)$.
First consider the projective lamination $[\lambda_1]$ which is the limit of a subsequence of $\{m_i\}$, denoted again by $\{m_i\}$, in the Thurston compactification of $\teich(S)$.
Let $l$ be a component of the measured lamination $\lambda_1$.
If $l$ is not a closed geodesic, we consider its minimal supporting open surface $\Int T(l)$ and take it away from $S$.
If $l$ is a closed geodesic, unless it lies on the boundary of the minimal supporting surface of another component, we cut the surface along $l$ and complete the resulting surface by attaching closed geodesics to the two open ends corresponding to $l$.

We thus obtain a possibly disconnected subsurface $S_1$ of $S$ each of whose frontier components is essential in $S$.
We let $\teich_b(S_1)$ be the Teichm\"{u}ller space of $S_1$, where the lengths of the geodesic boundary components can vary.
We then consider the limit $[\lambda_2]$ of $\{m_i|S_1\}$ in the Thurston compactification of $\teich_b(S_1)$, taking a subsequence again.
We note that when $S_1$ is disconnected, there might be a component from which the limit is disjoint.
If $\lambda_2$ is empty, \ie $\{m_i|S_1\}$ stays within a compact set of $\teich_b(S_1)$, then we stop the process here.
Suppose that $\lambda_2$ is not empty.
In contrast to $\lambda_1$, this limit $\lambda_2$ may have essential arcs as leaves since we allowed the lengths of the boundary components to vary.
For an arc component $\alpha$, we consider subsurfaces of $S_1$ with totally geodesic boundaries, containing $\alpha$, whose frontiers are disjoint from $\lambda_2$.
We take a subsurface which is minimal among such surfaces, and call it the minimal supporting surface of $\alpha$.
Now we repeat the same operation as before, replacing $\lambda_1$ above with $\lambda_2$ and  taking away also minimal supporting subsurfaces of arc components to get a subsurface $S_2$.

By an easy argument using the Euler characteristic, we see that this process terminates in finite steps, say in $n$ steps.
At the last stage, we get a sequence of measured laminations, $\lambda_1, \lambda_2, \dots , \lambda_{n-1}$, where $\lambda_n$ is empty.
By our construction, these laminations are pairwise disjoint.
We call the  sequence $(\lambda_1, \dots, \lambda_{n-1})$, the {\sl multi-layered Thurston limit} of the subsequence of $\{m_i\}$ which we took in the $(n-1)$-th step.
For each $j=2, \dots , n-1$, we define $\lambda_j'$ to be the union of non-arc components of $\lambda_j$.
The union $\lambda_1 \cup \cup_{j=2}^{n-1} \lambda_j'$ is called the {\sl core union} of the multi-layered Thurston limit $(\lambda_1, \dots  \lambda_{n-1})$.
We call the union of the core union and all frontier components of minimal supporting surfaces of the components of $\lambda_1', \dots \lambda_{n-1}'$ the {\sl intermediate union}.
We call the union of the intermediate union and all frontier components of the minimal supporting surfaces of arc components of $\lambda_1, \dots , \lambda_{n-1}$ that are disjoint from $\cup \lambda_j$, the {\sl extended union} of the multi-layered Thurston limit.
\end{definition}

The definition of  multi-layered of Thurston limit  {\sl does} depend on the choice of subsequences.
It is not defined for $\{m_i\}$ itself, but the final subsequence which we take in the last step.
Also, it  is evident from our definition that any unbounded sequence $\{m_i\}$ in $\teich(S)$ has a subsequence which has a multi-layered Thurston limit.

\begin{othertheorem}
\label{detecting limits}
Let $\{m_i\}$ be a sequence of $\teich(S)$ which has a multi-layered Thurston limit, $(\lambda_1, \dots, \lambda_p)$.
Then $\{qf(m_0, m_i)\}$ converges in $\teich(S) \cup \rbb$ to a point $a$ such that $e(a)$ contains the intermediate union and is contained in the extended union  of $(\lambda_1, \dots, \lambda_p)$.
\end{othertheorem}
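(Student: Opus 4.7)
The plan is to extract an algebraic limit of $\{qf(m_0,m_i)\}$, compare its end invariants with the multi-layered data, and conclude. By relative compactness of the Bers slice in $AH(S)$, I pass to a subsequence so that $\{qf(m_0,m_i)\}$ converges algebraically to a b-group $(G',\phi')$, projecting to $a \in \Rbb$. It then suffices to show that $e(a)$ contains the intermediate union and is contained in the extended union of $(\lambda_1,\dots,\lambda_p)$.

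For the ending-lamination part of $e(a)$, let $P_i$ be a shortest pants decomposition of $(S,m_i)$; by Bers' theorem its curves have uniformly bounded length, and after a further subsequence the $P_i$ converge in the Hausdorff topology to a geodesic lamination $\mu$. Proposition \ref{Hausdorff limit} then identifies the non-closed-curve minimal components of $\mu$ with the ending laminations of $(G',\phi')$. The task is to identify $\mu$ layer by layer with the multi-layered Thurston limit.

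I would argue inductively on the layer index $j$ that every non-arc component of $\lambda_j$ together with every frontier component of its minimal supporting surface appears in $\mu$. For $j=1$ this uses Thurston convergence $m_i\to[\lambda_1]$ and the thick-thin decomposition: curves forming the boundary of collars containing the support of $\lambda_1$ appear in $P_i$, and shortest pants curves inside the subsurface supporting each non-closed-curve component of $\lambda_1$ accumulate Hausdorff-theoretically onto that component. For $j\geq 2$, I would compare, on the thick part of $(S,m_i)$ corresponding to $S_{j-1}$, the hyperbolic geometry with that of $(S_{j-1},m_i|S_{j-1})$ viewed in $\teich_b(S_{j-1})$, transporting the inductive conclusion back inside $(S,m_i)$. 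Combined with Proposition \ref{Hausdorff limit}, this yields the inclusion of every non-closed-curve component of each $\lambda_j'$ as an ending lamination of $(G',\phi')$. For the parabolic part of the intermediate union, Brock's continuity of the length function combined with the fact that the relevant curves are pinched along the respective layer---either components of $\lambda_j$ themselves or frontier curves of non-arc components' minimal supporting surfaces---forces $\phi'$ to send them to parabolic elements, placing them in $e(a)$.

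For the upper bound $e(a)\subseteq$ extended union, every parabolic curve of $(G',\phi')$ has length tending to zero along $\{m_i\}$ by continuity, so it must arise from pinching at some layer; the only leaves admissible outside the intermediate union are the frontier curves of arc components' minimal supporting surfaces, which is precisely what the extended union adds. Ending laminations correspond to non-closed-curve minimal components of $\mu$, which the inductive identification forces to be components of some $\lambda_j'$. The main obstacle is the inductive comparison step: rigorously translating Thurston convergence on $\teich_b(S_{j-1})$---which allows boundary lengths to vary---into statements about the shortest pants decomposition of $(S,m_i)$ restricted to the subsurface corresponding to $S_{j-1}$, when in $(S,m_i)$ those boundaries are already very short. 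The renormalisation must be carried out carefully so that degeneration in $\teich_b(S_{j-1})$ produces genuine short curves or Hausdorff accumulation in the full surface $(S,m_i)$, and so that arc components of $\lambda_j$---which have no direct analogue as short geodesics in $S$---contribute only through frontier curves of their minimal supporting surfaces, exactly matching the definition of the extended union.
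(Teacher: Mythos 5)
Your overall strategy -- extract an algebraic limit on the Bers slice, take Hausdorff limits of shortest pants decompositions, apply Proposition \ref{Hausdorff limit}, and match the layers of the multi-layered limit against the end invariants -- is the same skeleton as the paper's proof. But two of your key claims are false, and they are exactly the points where the paper has to do its hardest work. First, you assert that the closed-curve components of the intermediate union are ``pinched along the respective layer,'' so that Brock's continuity makes them parabolic. A simple closed curve $\gamma$ appearing as an atom of a Thurston limit $[\lambda_j]$ need \emph{not} have $\length_{m_i}(\gamma)\to 0$: iterated Dehn twisting about $\gamma$ sends $m_i\to[\gamma]$ in the Thurston compactification while $\length_{m_i}(\gamma)$ stays constant, and it can even diverge. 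The paper splits into three cases according to the behaviour of $\length_{m_i}(\gamma)$; in the bounded case it builds auxiliary curves $c_i$ crossing $\gamma$ once or twice, shows $2c_i/i(c,c_i)\to\gamma$ with $\length_{m_i}(2c_i/i(c,c_i))\to 0$, and only then applies Brock's continuity; in the divergent case it has to pass to a geometric limit and its bi-Lipschitz model manifold and rule out the possibilities (a), (b), (c) as in Proposition \ref{end invariants}. None of this is present in your argument.

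Second, for the upper bound you claim that ``every parabolic curve of $(G',\phi')$ has length tending to zero along $\{m_i\}$ by continuity.'' Brock's theorem gives continuity of the translation length in the hyperbolic $3$-manifolds, i.e.\ $\length_{\hyperbolic^3/G_i}(\phi_i(\gamma))\to 0$; this does not imply $\length_{m_i}(\gamma)\to 0$ (the implication only goes the other way, via Bers' inequality). Parabolics of the algebraic limit can arise from $\integers$-cusps attached to non-algebraic simply degenerate or wild ends of the geometric limit, or from torus boundary components of the model manifold, without any pinching visible in the conformal structures $m_i$. This is why the paper invokes Lemma \ref{origin of parabolics} and spends the second half of the proof analysing cases (i)--(iii) with subsurface projections, lowest degenerate ends above the algebraic locus, and spiralling of the Hausdorff limit around the curve. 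Without an argument covering these non-pinched parabolics, your containment $e(a)\subseteq\Lambda_e$ is unjustified. The inductive renormalisation issue you flag at the end is real but comparatively minor; the missing geometric-limit analysis is the essential gap.
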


\begin{proof}
Since every subsequence of $\{(G_i,\phi_i)=qf(m_0, m_i)\}$ has a convergent subsequence in $\teich(S) \cup \bb$, we can assume that $\{qf(m_0, m_i)\}$ converges to a point $(\Gamma, \psi) \in \bb$.
What we have to show is that $a=\pi(\Gamma, \psi)$ has the desired property.

Let $\Lambda_c, \Lambda_i$ and $\Lambda_e$ be the core union, the intermediate union and the extended union of $(\lambda_1, \dots , \lambda_p)$ respectively.
Let $K_i$ be a shortest pants decomposition in $(S, m_i)$.
We shall first show that each component of $\Lambda_c$ is a minimal component of the Hausdorff limit of any subsequence of $\{K_i\}$.
Let $\nu$ be the Hausdorff limit of a subsequence of $\{K_i\}$.
By Lemma 5.3 in \cite{OhD}, the support of every component of the support of $\lambda_1$ is a minimal component of $\nu$.
Note also that no complementary region of $\nu$ contains a measured lamination since  $K_i$ has this property.

As in Definition \ref{multi-layered limit}, we consider a subsurface $S_1$ of $S$ obtained by removing minimal open supporting surfaces of non-simple closed curve components of $\lambda_1$ and cutting along simple closed curves in $\lambda_1$, and the Thurston limit $[\lambda_2]$ of $\{m_i|S_1\}$.
Suppose that $\lambda_2$ contains a component $l$.
Then, since the complement of $\nu$ does not contain a measured lamination as was remarked above, $l$ must intersect $\nu$.
If $l$ intersects $\nu$ transversely, the length of $K_i$ must go to $\infty$ as in the proof of Lemma 5.3 in \cite{OhD}, and we get a contradiction, for the length of $K_i$ is bounded independently of $i$ by the Bers constant.
Therefore, the support of $l$ must be contained in $\nu$.
By repeating this argument for each step of constructing multi-layered Thurston limit, we can see that the support of every component of $\Lambda_c$ is contained in $\nu$ as a minimal component.
By Proposition \ref{Hausdorff limit}, this shows that every component of $\Lambda_c$ that is not a simple closed curve is contained in $e(a)$.

We next turn to consider simple closed curves in $\Lambda_c$.
Let $\gamma$ be a simple closed curve in $\Lambda_c$, and suppose that it is contained in $\lambda_j'$.
By Definition \ref{multi-layered limit}, $[\lambda_j']$ is contained in the Thurston limit of $\{m_i|S_j\}$.
If the length of $\gamma$ with respect to $m_j$ goes to $0$, by Bers' inequality in \cite{Be}, we see that $\gamma$ represents a parabolic curve for $(\Gamma, \psi)$, and hence is contained in $e(a)$.

Suppose next that the length of $\gamma$ with respect to $m_i$ is bounded both from above and away from $0$.
Consider a pants decomposition of $S_j$ containing $\gamma$ as a boundary component of a pair of pants, which is independent of $i$.
Let $\Sigma$ and $\Sigma'$ be pairs of pants in the decomposition whose boundary contains $\gamma$, which may coincide.
We take a simple closed geodesic $c_i$ in $\Sigma \cup \Sigma'$ intersecting $\gamma$ at two points if $\Sigma$ and $\Sigma'$ are distinct, and at one point if they coincide, so that it is shortest in $(\Sigma \cup \Sigma', m_i)$ among such curves.
We also take another such curve $c$ in $\Sigma \cup \Sigma'$ which is independent of $i$.

By the definition of the Thurston compactification, the length of $c$ in $(\Sigma \cup \Sigma' , m_i)$ goes to $\infty$.
On the other hand, since $c_i$ intersects $\gamma$ at two points (or one point if $\Sigma=\Sigma'$),  $c_i$ is homotopic to the union of two (or one if $\Sigma=\Sigma'$) geodesic arcs $a_i, a_i'$ with endpoints on $\gamma$ and two (or one if $\Sigma=\Sigma'$) sub-arcs on $\gamma$.
Since the length of $\gamma$ with respect to $m_i$ does not go to $0$, we can take geodesic arcs homotopic to $a_i$ and $a_i'$ keeping endpoints on $\gamma$, which have bounded lengths.
We can construct a simple closed curve intersecting $\gamma$ at two points (or one point if $\Sigma=\Sigma'$) from these arcs and arcs on $\gamma$, which also has  bounded length.
Since $c_i$ was taken to be shortest, the length of $c_i$ is also bounded.
These observations imply that the times $c_i$ goes  around $\gamma$ in comparison to $c$, which is equal to $i(c, c_i)/2$ (or $i(c,c_i)$ if $\Sigma=\Sigma'$) goes to $\infty$.
We note that $2c_i/i(c,c_i)$ converges to $\gamma$  (or $2\gamma$ if $\Sigma=\Sigma'$).
On the other hand, our estimates of $\length_{m_i}(c_i)$ and $i(c,c_i)$ imply that $\length_{m_i}(2c_i/i(c,c_i)) \rightarrow 0$.
It follows that $\gamma$ represents a parabolic curve for $a$ by the continuity of the length function proved by Brock \cite{Br}.

It remains to deal with the case when the length of $\gamma$ with respect to $m_i$ goes to $\infty$.
As was shown above, the simple closed curve $\gamma$ is contained in the Hausdorff limit $\nu$.
Suppose, seeking a contradiction, that $\gamma$ is not a parabolic curve.
Our argument from here to the end of the proof is similar to the last three paragraphs of the proof of Proposition \ref{end invariants}, and involves a geometric limit and its model manifold.
Let $H$ be a geometric limit of $\{G_i\}$.
We consider a model manifold $\mathbf M$ of the non-cuspidal part of $\hyperbolic^3/H$, and a standard algebraic immersion $g': S \rightarrow \mathbf M$, which we can assume to be a horizontal embedding since $\{(G_i,\phi_i)\}$ lies on a Bers slice.
As in the proof of Proposition \ref{end invariants}, we consider the following three possibilities.
 (a) $g'(\gamma)$ is homotopic in $\mathbf M$ into a geometrically finite end on $S \times \{1\}$.  (b) $g'(\gamma)$ is homotopic into a simply degenerate end corresponding to $\Sigma \times \{t\}$ for some incompressible subsurface $\Sigma$.  (c) There are either a simply degenerate end or a horizontal annulus on a torus boundary, corresponding to $\Sigma \times \{t\}$, and  an incompressible subsurface $T$ of $S$ containing $\gamma$ with the following condition.
The horizontal surface $(\Sigma \cap T) \times \{t-\epsilon\}$ for small $\epsilon >0$ is vertically homotopic into $g'(T)$ in $\mathbf M$, and $\Sigma \cap T$ intersects $\gamma$ essentially when regarded as a subsurface of $T$.

If $\gamma$ is homotopic to a curve on a geometrically finite end as in the case (a), then the length of $m_i$ with respect to $\gamma$ converges to its length on that geometrically finite end.
This means that $\length_{m_i}(\gamma)$ is bounded, contradicting our assumption.
Therefore this case cannot occur.
In the cases (b) and (c), we can apply the same argument as in the proof of Proposition \ref{end invariants}, to show that $\gamma$ cannot be contained in the Hausdorff limit $\nu$, contradicting the fact proved above.
In the present situation, $\nu$ is the limit of shortest pants decomposition whereas in Proposition \ref{end invariants} we considered limits of parabolic curves or ending laminations.
Still the same argument works since we can use Lemma 5.11 of Minsky \cite{Mi} both for ending laminations (or parabolic curves) and shortest pants decomposition.
Thus we have proved that every simple closed curve in $\Lambda_c$ is contained in $e(a)$.

Now, we turn to consider the simple  closed curves in $\Lambda_i \setminus \Lambda_c$.
Every boundary component of the minimal supporting surface of any non-simple closed curve component of $\Lambda_c$ is also contained in $e(a)$ since such a component represents the ending lamination of $(\Gamma, \psi)$.
Therefore, by our definition of intermediate union, we see that $\Lambda_i$ is contained in $e(a)$.

Next we shall show  that every ending lamination and parabolic curve is contained in $\Lambda_e$.
Since the total length of $K_i$ is bounded from above independently of $i$, as was remarked above,  its Hausdorff limit $\nu$ cannot intersect the Thurston limit of $(S_j, m_i|S_j)$ for any $j=1, \dots , p$.
Therefore each minimal component $\nu_0$ of $\nu$ either coincides with the support of a component of  $\Lambda_e$ or is disjoint from $\cup_{j=1}^p \lambda_p$.
In the latter case, $\nu_0$ survives in the subsurface $S_{p+1}$ which appears in the last stage of the construction in Definition \ref{multi-layered limit}.
If $\nu_0$ is not a simple closed geodesic, then it is impossible that $S_{p+1}$ containing $\nu_0$ stays in a bounded set in $\teich_b(S_{p+1})$.
This is a contradiction.
Therefore, every minimal component of $\nu$ that is not a simple closed curve is contained in $\Lambda_e$.
By Proposition \ref{end invariants}, every ending lamination of $(\Gamma, \psi)$ is contained in $\nu$, and hence in $\Lambda_e$ by the above observation.
Therefore, what remains is to deal with parabolic curves of $(\Gamma, \psi)$.

Let $c$ be a parabolic curve of $(\Gamma, \psi)$.
By Lemma \ref{origin of parabolics}, there are three possibilities: (i) the first is when $g'(c)$ is homotopic to a core curve of an annulus boundary both of whose ends are contained in  the top geometrically finite end lying on $S \times \{1\}$; (ii) the second is when $g'(c)$ is homotopic to a core curve of an annulus boundary corresponding to a $\integers$-cusp, attached to  a simply degenerate or wild end; and (iii) the third is when $g'(c)$ is homotopic to a core curve of a torus boundary component.
In all cases, $c$ cannot intersect $\Lambda_c$ transversely since each component of $\Lambda_c$ is either an ending lamination or a parabolic curve.
In the case (i), $\length_{m_i}(c)$ goes to $0$, and hence $c$ must be contained in some $\lambda_j$.
This implies that $c$ is contained in $\Lambda_c \subset \Lambda_e$.

Suppose that $g'(c)$ is homotopic to a $\integers$-cusp attached to a simply degenerate or wild end corresponding to $\Sigma \times \{t\}$ as in (ii).
If this end is algebraic (hence is simply degenerate in particular), then $c$ lies on the boundary of the minimal supporting surface of an ending lamination of $(\Gamma, \psi)$, which is a non-simple-closed-curve component of $\Lambda_c$.
Therefore, $c$ is contained in $\Lambda_e$ in this case.

Now, we consider the case when the end is not algebraic.
Let $A$ be an annulus bounded by $g'(c)$ and a core curve of the $\integers$-cusp.
Since $g'(c)$ is not homotopic to a geometrically finite end, $c$ is not contained in a subsurface where the hyperbolic structure is in a bounded set of the Teichm\"{u}ller space.
Therefore, if $c$ does not lie in $\Lambda_e$, then $c$ intersects a component of multi-layered Thurston limit transversely.
Since $c$ cannot intersect $\Lambda_c$, this means that $c$ intersects an arc component $\alpha$ of some $\lambda_j$ which represents the limit in the Thurston compactification of $\teich_b(S_j)$.
Now, we consider the lowest simply degenerate end above $g'(S)$ whose projection intersects $S_j$, which we denote by $e_1$, and suppose to correspond to $T_1 \times \{s_1\}$.
If $T_1\cap S_j$ intersects $\alpha$ essentially, then so does the ending lamination $\mu_1$ of $e_1$.
As before, there is a homeomorphism $f_i: S\rightarrow S$ such that $f_i(K_i)|T_1$ converges to a geodesic lamination containing $\mu_1$ and $f_i|S_j$ is the identity.
This implies that the length of $K_i$ with respect to $m_i$ grows to $\infty$ since the limit of $K_i \cap (T_1 \cap S_j)=f_i(K_i) \cap (T_1 \cap S_j)$ intersects $\alpha$ essentially.
This is a contradiction.

Next suppose that $T_1 \cap S_j$ can be isotoped to be disjoint from $\alpha$.
Then we consider the complement $S_j \setminus T_1$, which we let be $S_j^1$, and consider the lowest end whose projection intersects $S_j^1$.
We repeat the same argument as above, and within finite steps, we get an end whose projection intersects $\alpha$ essentially since there is no essential half-open annulus going to a wild end.
Then, we get a contradiction by the same argument as above.
Thus we have shown that the  possibility (ii) cannot occur either.

It remains to consider the case (iii) when $c$ is homotopic to a core curve of an algebraic torus boundary component.
If there is a component $a_i$ of $K_i$ which intersects $c$ essentially, then 
the projection of $a_i$ to an annulus $A(c)$ around $c$ goes to an end of $\mathcal C(A(c))$ by of Lemma 5.11 of \cite{Mi}.
Otherwise, there is a component $a_i$ of $K_i$ homotopic to $c$.

As was shown before, if $c$ does not lie in $\Lambda_e$,  it must intersect an arc component $\alpha$ of $\lambda_j$.
If $a_i$ is homotopic to $c$, this implies that the length of $a_i$ goes to $\infty$, which is a contradiction.
Therefore, we have only to consider the case when $a_i$ intersects $c$ essentially.
Since the projection of $a_i$ to $A(c)$ goes to an end of $\mathcal C(A(c))$, the Hausdorff limit $\nu$ must spiral around $c$ from both sides.
By the same argument as in the proof of Lemma 5.3 in \cite{OhD}, this implies that the length of $a_i$ goes to $\infty$.
This contradicts the fact that $a_i$ is a component of $K_i$ whose total length is bounded.
Thus we have shown that $c$ must be contained in $\Lambda_e$.
This completes the proof.
\end{proof}

\begin{remark}
The ambiguity coming from the difference between the intermediate union and the extended one is inevitable if we only consider the multi-layered Thurston limits, as the following example shows.

Let $S_1, S_2$ and $S_3$ be   subsurfaces with geodesic boundaries of $S$ with respect to some fixed hyperbolic metric, such that both $S_1 \cap S_2$ and $S_1 \cap S_3$ are non-empty and $S_2 \setminus \Int S_1$ is a three-holed sphere $P$, and $S_3 \setminus \Int S_1$ is a strip (a regular neighbourhood of a geodesic arc) in $P$.
We further assume that two of the boundary components of $P$ lie outside $S_1$.
Let $f_1, f_2$ and $f_3$ be partial pseudo-Anosov homeomorphisms supported on $S_1, S_2$ and $S_3$ respectively.
We consider two sequence $\{(G_i,\phi_i)=qf(m_0, f_1^{i*} \circ f_2^{i*}(m_0))\}$ and $\{(G_i', \phi_i')=qf(m_0, f_1^{i*} \circ f_3^{i*}(m_0))\}$, and their algebraic limits $(\Gamma, \psi)$ and $(\Gamma, \psi')$.

The multi-layered Thurston limits of $\{f_1^{i*} \circ f_2^{i*}(m_0)\}$ and $\{ f_1^{i*} \circ f_3^{i*}(m_0)\}$ are the same.
The limit has a form $(\lambda_1, \lambda_2)$, where $\lambda_1$ is the stable lamination of $f_1$, and $\lambda_2$ is an arc on $P$ both of whose endpoints lie on $S_1 \cap P$.
Each component of $\partial P \setminus S_1$ is contained in the extended union but not in the intermediate union of $(\lambda_1, \lambda_2)$.
On the other hand, the components of $\partial P \setminus S_1$ are parabolic curves for $(\Gamma, \psi)$ but not for $(\Gamma', \psi')$ as we can see by considering geometric limits of $\{G_i\}$ and $\{G_i'\}$.
\end{remark}

\end{document}